\documentclass[11pt,a4paper,twoside]{amsart}
\usepackage[latin1]{inputenc}
\usepackage[T1]{fontenc}

\usepackage{comment}
\usepackage{datetime2}
\usepackage{datetime2-calc}
\usepackage{tikz-cd}
\usepackage{color, mathtools}
\usepackage{amssymb}
\usepackage{microtype}
\usepackage{graphicx}
\usepackage{verbatim}
\usepackage{hyperref}
\usepackage{booktabs}

\theoremstyle{plain}
\newtheorem{thm}{Theorem}[subsection]
\newtheorem{lem}[thm]{Lemma}
\newtheorem{prop}[thm]{Proposition}

\newtheorem{cor}[thm]{Corollary}
\newtheorem{conj}[thm]{Conjecture}

\theoremstyle{definition}
\newtheorem{df}[thm]{Definition}
\theoremstyle{remark}
\newtheorem{ex}[thm]{Example}

\newtheorem{rem}[thm]{Remark}
\newtheorem{quest}[thm]{Question}

\DeclareMathOperator{\Gal}{Gal}
\DeclareMathOperator{\borel}{Bo}
\DeclareMathOperator{\bloch}{Bl}
\DeclareMathOperator{\rank}{rk}
\DeclareMathOperator{\lcm}{lcm}
\DeclareMathOperator{\tor}{tors}
\newcommand{\tors}{\tor}
\DeclareMathOperator{\Hom}{Hom}
\DeclareMathOperator{\reg}{reg}

\newcommand{\ZZ}{\mathbb{Z}}
\newcommand{\Z}{\ZZ}
\newcommand{\PP}{\mathbb{P}}
\newcommand{\QQ}{\mathbb{Q}}
\newcommand{\Q}{\QQ}

\newcommand{\RR}{\mathbb{R}}
\newcommand{\R}{\RR}

\DeclarePairedDelimiterX\set[1]{\lbrace}{\rbrace}{\def\given{\;\delimsize\vert\;}#1}
\DeclarePairedDelimiterX\gen[1]{\langle}{\rangle}{\def\given{\;\delimsize\vert\;}#1}
\DeclarePairedDelimiterX\abs[1]{\lvert}{\rvert}{#1}
\DeclarePairedDelimiterX\size[1]{\lvert}{\rvert}{#1}
\DeclarePairedDelimiterX\norm[1]{\lVert}{\rVert}{#1}
\DeclarePairedDelimiterX\symb[1]{[}{]}{#1}
\newcommand{\wtilde}[2]{#1 \mathbin{\tilde{\wedge}} #2 }

\newcommand{\wB}{\mathcal{B}}

\newcommand{\XX}{\mathbf{X}}
\newcommand{\OO}{\mathcal{O}}
\newcommand{\C} {\mathbb{C}}

\newcommand{\CL} {\mathcal{L}}

\newcommand{\B} {\mathcal B}

\newcommand{\tCL}{\tilde{\CL}}
\newcommand{\tnu}{\tilde{\nu}}

\newcommand{\tA}{\tilde{A}}
\newcommand{\tB}{\tilde{B}}
\newcommand{\tC}{\tilde{C}}
\newcommand{\td}{\tilde{\delta}}
\newcommand{\twedgetwo}{\tilde{\bigwedge}{}^{\hskip-2pt 2 \hskip 2pt}}
\newcommand{\bwedgetwo}{\bigwedge{}^{\hskip-2pt 2 \hskip 2pt}}
\newcommand{\ceq}{\stackrel{?}{=}}

\def \newline{{\par\noindent}}

\def \Li{{\rm Li}}

\def \CO{{\mathcal O}}

\begin{document}
\date{October 16, 2026}
\title{Computations in weight 3 of higher Bloch groups}

\begin{abstract}
We computationally investigate a weight~$3$ analogue of the
relationship between higher Bloch groups, algebraic $K$-theory,
higher regulators, and special values of Dedekind zeta functions.  In particular, for $F$ a number field $F$, we define the
\emph{pruned Bloch group} $\B _3(F)$ and compare the covolume of its regulator lattice with $\zeta_F(3)$. We define a quantity $\kappa_3(F)$, which is conjecturally an integer, that is expected to be closely
related to the order of $K_4(\mathcal{O}_F)$. 
\end{abstract}

\author[H. Gangl]{Herbert Gangl}
\address{H. Gangl, Department of Mathematical Sciences, Stockton Road, Durham DH1 3LE, United Kingdom}
\email{herbert.gangl@durham.ac.uk}
\urladdr{\url{http://maths.dur.ac.uk/~dma0hg/}}

\author[P. E. Gunnells]{Paul E. Gunnells}
\address{P. E. Gunnells, Department of Mathematics and Statistics, LGRT 1115L, University of Massachusetts, Amherst, MA 01003, USA}
\email{gunnells@umass.edu}
\urladdr{\url{https://gunnells.github.io}}

\author[D. Yasaki]{Dan Yasaki} \address{D. Yasaki, Department of
Mathematics and Statistics, University of North Carolina at
Greensboro, Greensboro, NC 27412, USA}
\email{d\_yasaki@uncg.edu}
\urladdr{\url{https://go.uncg.edu/d_yasaki}}

\thanks{This work was partially supported by grants 963025 (PG) and
848154 (DY) from the Simons Foundation. HG is grateful to the MPI Bonn
for its hospitality and for providing perfect working conditions.}

\maketitle
\setcounter{tocdepth}{1}

\section{Introduction}

\subsection{}
Let $F$ be a number field with $r_1$ real and $r_2$ complex places, discriminant $\Delta_F$, and ring of integers $\CO=\CO_F$.  The celebrated Borel Theorems \cite{BorelZeta} give 
the rank of its algebraic $K$-groups $K_{2m-1}(F)$, $m>1$, as 
\[d_m \coloneq \rank K_{2m-1}(F) = \begin{cases} r_2,&\text{ if }m\geq2\text{ even,}\\ r_1+r_2 &\text{ if } m\geq 3\text{ odd.}\end{cases}\]
In the same paper, Borel also shows that  the even-indexed groups $K_{2m}(F)$ for $m>0$ have rank $0$.

A crucial ingredient in Borel's proof is the construction for $m>1$ of a higher \emph{regulator map} sending $K_{2m-1}(\C)$ to $\R$.  Composing this map with $d_m$ suitable complex embeddings $F\hookrightarrow\C$ yields a mapping $K_{2m-1}(F)$ to a full rank lattice $\Lambda^{\borel}_m(F)$ in $\R^{d_m}$.
Moreover, he shows that up to a known power of $\pi$ and
$\sqrt{|\Delta_F|}$, the covolume of $\Lambda^{\borel}_m(F)$ is a
non-zero rational multiple of the special value at $s=m$ of the
Dedekind zeta function $\zeta_F(s)$.  This covolume is the \emph{Borel
regulator} $\reg_m^{\borel}(F)$.  Note that, according to Soul\'e
\cite[Th\'eor\`eme 3]{SouleInventiones}, in conjunction with Quillen's
vanishing result for $K_{2n}(k)$ for a finite field $k$ and $n>0$, we
have $K_{2m-1}(\CO_F) = K_{2m-1}(F)$ for $m>1$ (cf.~Theorem~\ref{thm:Kodd} below).

\subsection{} It turns out to be very unwieldy to try to explicitly compute the Borel regulator on elements in $K_{2m-1}(\CO_F)$. Already for the simplest case of $m=2$ and $F=\Q(\sqrt{-3})$ substantial computational efforts did not allow the authors of \cite{ChooGarciaSnaith} to approximate that regulator very well.  In his seminal Irvine Lecture notes \cite{BlochIrvine},
Bloch envisaged the construction of a group $B_2(F)$, in terms of
infinitely many generators and relations, that he conjectured to be
closely related to $K_3(F)$. Later Suslin proved Bloch's vision to be correct.  He gave a precise relationship between the groups, stating that there is a surjection from $K_3(F)$  onto $B_2(F)$ with a specific torsion group as its kernel \cite[Theorem~5.2]{SuslinK3}.
 Moreover, Bloch concocted an $\R$-valued map
$\CL_2$, now known as the \emph{Bloch-Wigner dilogarithm}, that
can be used to map $B_2(F)$ to  $\R^{r_2}$. He
expected the dilogarithm $\CL_2$ to play a similar role for $B_2(F)$ as the Borel
regulator map does for $K_3(\OO_F)$. In particular, he conjectured $\CL_2$ maps $B_2(F)$
to a lattice in $\R^{r_2}$
with covolume rationally proportional to $\zeta_F(2)$
(again up to a known normalizing factor). Furthermore he corroborated this for 
cyclotomic fields \cite[Theorem 7.2.4]{BlochIrvine} and imaginary quadratic cases \cite{BlochApplications}.
This relationship with $\zeta_F(2)$ was proved, in a slightly weaker form, by Zagier \cite{ZagierHyperbolicVolumes}, and is also a consequence of the cited works by Borel, Bloch and Suslin.  See also Ramakrishnan \cite{RamakrishnanSurvey} and Goncharov \cite{Goncharov95}.

For general $K_{2m-1}(F)$, $m\geq 2$, Zagier \cite[p.~416]{ZagierTexel} proposed a conjectural generalization $B_m(F)$ of Bloch's $B_2(F)$.  The construction again uses infinitely many generators and relations, and as before the group $B_m(F)$ is expected to be closely related to $K_{2m-1}(\OO_F)$. More specifically, 
single-valued higher analogues of $\CL_2(z)$ were defined by Ramakrishnan \cite{Ramakrishnan} and shown to satisfy a functional equation.  Zagier later expressed  Ramakrishnan's function explicitly in terms of classical polylogarithms as a function $\CL_m(z)$ \cite{ZagierBlochWignerRamakrishnan}.(\footnote{Zagier in fact gave three different explicit realizations, denoted in \cite{ZagierBlochWignerRamakrishnan} by $D_m(z)$ (eq.~12), $\widetilde{D}_m(z)$ (eq.~29) and finally $P_m(z)$ (eq.~33).  The first version was also given simultaneously by Wojtkowiak \cite{Wojtkowiak}; the third version later appears as $D_m(z)$ (up to a possible factor $\sqrt{-1}$) in Beilinson--Deligne \cite{BeilinsonDeligne} and is the most widespread version.  $P_m(z)$ is the function we denote $\CL_m(z)$.}) 
Zagier applied a map built from $\CL_m(z)$ to $B_m(F)$ to produce a lattice in $\R^{d_m}$. Zagier's conjecture \cite[p.~417]{ZagierTexel} states that there is a canonical map
\begin{equation}\label{eq:dbz}
    B_m(F) \longrightarrow K_{2m-1}(\OO_F)
\end{equation}
with finite kernel and finite cokernel whose composite with the Borel regulator map coincides with the image of $B_m(F)$ under the polylogarithm map.  Using this construction he numerically verified the expected relation of the covolume of the image with the special value $\zeta_F(m)$ for some fields $F$. 
Beilinson and Deligne \cite{BeilinsonDeligne} give a map as in \eqref{eq:dbz}, but without proof of the finiteness statements and conditionally on the existence of a category of mixed Tate motives over a number field.  The existence was subsequently proved by Deligne and Goncharov \cite{DeligneGoncharov}.  In  \cite{deJeu1995Zagier}, de Jeu unconditionally constructs \eqref{eq:dbz} a rational injection of his multi-relative $K$-theory variant of $B_m(F)$, which is moreover compatible with regulators.

\subsection{}
Returning now to $K$-theory, a beautiful insight of Lichtenbaum \cite{Lichtenbaum} provided a conjectural interpretation of the unknown rational factor in Borel's result connecting $\reg_m^{\borel}(F)$ to the special value $\zeta_F(m)$.  Namely he predicted a value for this rational factor in terms of the order of the even-indexed $K$-group $K_{2m-2}(\CO_F)$ and the order of the torsion subgroup $K_{2m-1}(\CO_F)_{\tors}$ of the odd-indexed $K$-group $K_{2m-1}(\CO_F)$.  The resulting formula is in the spirit of Dirichlet's Class Number Formula, which arises for $m=1$ when one computes the residue of the pole of $\zeta_F(s)$ at $s=1$.

One obtains a simpler formulation of the resulting conjecture \cite[Question~4.2]{Lichtenbaum} after applying Hecke's functional equation for $\zeta_F(s)$.  Up to a normalizing power of $\pi$ and powers of 2, one should have 
\begin{equation}\label{eq:licht}
    \zeta_F^*(1-m) \ceq \frac{\size{K_{2m-2}(\CO_F)}}{\size{K_{2m-1}(\CO_F)_{\tors}}}\reg_m^{\borel}(F),
\end{equation}
where $\zeta^*_F(m)$ denotes the first non-vanishing term in the Taylor series expansion of $\zeta_F(s)$ at $s=m$.  Note that in \eqref{eq:licht} and in the sequel, we denote a conjectural identity $\ceq $.

The conjecture \eqref{eq:licht} has been proved, at least away from the $2$-part, in the special case of an abelian field $F$.  Kolster--Nguyen--Fleckinger~published a proof in \cite{Kolsteretal} but with erroneous Euler factors due to conflation of two lattices.  This error was pointed out by the first author to them, and subsequently the Euler factors were corrected by Benois--Nguyen \cite[Appendix A3]{BenoisNguyen}. The same result was subsequently reproved by Burns--Greither  \cite{BurnsGreither} and Huber--Kings \cite{HuberKings}. Important later work by Flach \cite{Flach} completed the result by treating the $2$-part; see also \cite{k3gens} for a discussion of Flach's work in the context of $K_3(\OO_F)$ and $B_2(F)$.

\subsection{}
The aim of this paper is to combine the framework of Zagier's
conjecture with Lichtenbaum's ideas, and in particular to numerically
investigate this circle of ideas in the case of weight $m=3$.  The case of weight $m = 2$ has been investigated in earlier work \cite{browkin-gangl, k3gens, guo-qin}.  Thus for a large set of number fields $F$ of varying signatures and Galois
closures we compute explicit examples of weight $3$ regulators and compare them 
with the special values $\zeta_{F} (3)$ and $\zeta^{*}_{F} (-2)$ with
the hopes of observing a precise analogue of Lichtenbaum's
\eqref{eq:licht}.  

For this purpose we make some modifications to the ingredients in \eqref{eq:licht}.  First, we replace the order of the torsion subgroup $\size{K_5(\OO_F)_{\tors}}$ in
\eqref{eq:licht} by an
integer $w_3(F)$ that agrees with the order of this subgroup up to a
possible factor of $2$ and is much simpler to compute; see
Corollary \ref{w3F} for a precise statement. Furthermore, instead of
considering $K_5(\OO_F)$, which is difficult to compute in practice, we
use a ``pruned'' version $\wB_3(F)$ of Zagier's third Bloch group
$B_3(F)$ (Definition \ref{def:working-bloch-group3}).  This notion lets us ignore any potential issues that are inherited from the loss of information by ignoring the $\R/(2\pi i)^2\Z$-part of the enhanced dilogarithm \cite{zickert-extended, zickert-holomorphic} and ``enshrines'' the standard practice of checking whether an element in $B_2(F)$ is trivial mod torsion by checking its vanishing under the dilogarithm.  We call the
group pruned because of this loss of information.  In order to emphasize the
parallel formulation,  we denote the covolume of the lattice generated
by the single-valued trilogarithm  $\CL_3$ on elements in $\wB_3(F)$
by $\reg_3^{\bloch}(F)$.  This covolume could be thought of as a
``regulator for the third Bloch group of $F$,'' although we do not expect it to have certain desired functorial properties.

On the basis of tens of thousands of experiments, we postulate the existence of an \emph{integer} $\kappa_3(F)$ which satisfies the following conjecture:

\begin{conj}\label{conj:kappa}
Let $F$ be a number field with signature $(r_1, r_2)$, degree $n = r_1 + 2r_2$, and discriminant $\Delta_F$.  Define $\kappa_3(F)$ to be the real number 
\[\kappa_3(F) = \frac{\sqrt{|\Delta_F|}^{\,5} }{2^{3 r_1+5 r_2}3^{r_1+r_2}\pi^{3 r_2}}\frac{w_3(F) \zeta_F(3)}{\reg^{\bloch}_3(F)}.\]
Then $\kappa_3(F) \in \ZZ_{\geq 1}$.
\end{conj}

With the above notations, we can give a statement in the spirit of
Lichtenbaum's conjecture \eqref{eq:licht} but in the setting of
Zagier's conjecture \eqref{eq:dbz}.

\begin{conj}\label{conj:liza} For a number field $F$ with signature $(r_1, r_2)$ we have
\[\pi^{2(r_1+r_2)} \zeta_F^*(-2) = q_{r_1,r_2}\cdot \frac{\kappa_3(F)}{w_3(F)}{{\reg}^{\bloch}_3(F)}\]
with $q_{r_1,r_2}={(-1)^{r_1}}{2^{r_1+2 r_2} 3^{r_1+r_2}}$.
\end{conj}

We emphasize that although we work with the putative integer $\kappa_3(F)$ and not $\size{K_4(\OO_F)}$, we expect $\kappa_3(F)$ to be closely related---and in fact sometimes equal to---the order of $K_4(\OO_F)$.   In the spirit of Lichtenbaum, we ask 

\begin{quest} \label{qq}
Let $E$ and $F$ be number fields.
\begin{enumerate}
\item \label{q1}
Is it true that if $F\subset E$, then   
\[\kappa_3(F)
\ \text{ divides }\ \kappa_3(E)?\]
    \item \label{q2}
Is it true that 
\[\size{K_4(\OO_F)} \ \text{ divides }\ \kappa_3(F)?\]
\end{enumerate}
\end{quest}
Our data supports the truth of (1).  We know the two  values in \eqref{q2} are not equal in general and are investigating the discrepancy more deeply for a future project.  For partial results and heuristics, see Examples~\ref{ex:d4sqrt5} and \ref{ex:f5}, as well as Section~\ref{s:cyclo}.

\subsection{} Here is a guide to the paper. In Section~\ref{s:basic}
we review constructions from the literature and define the basic
objects we wish to compute, including our version of the weight 3 Bloch group
(Definition~\ref{def:working-bloch-group3}, \eqref{def:our_B_3}) and the important quantities $w_3(F)$
and $\kappa_3(F)$.  In Section \ref{s:id} we discuss implementation
details.  Section~\ref{s:ex} presents various examples of computing $\kappa_3(F)$ for a selection of fields with different signatures.  Section~\ref{s:cyclo}
discusses the expected discrepancy between $\kappa_3(F)$ and
$\size{K_4(\OO_F)}$.  Finally Section \ref{s:zl} discusses using these
results to evaluate the special value $\zeta_F(3)$ to high precision.

\section{The basic objects in the computations}\label{s:basic}

\subsection{}
In this section, we first recall the classical Bloch group as given by Suslin \cite{SuslinK3}.  Then we give a modification by Calegari, Garoufaldis, and Zagier \cite{cgz}.  These are Bloch groups of weight $2$.  Then we give a further modification of the weight $2$ Bloch group that is more accessible to computer calculation, and use it to define a modified higher Bloch group of weight $3$.  

Let $Z(F)$ denote the group of formal $\ZZ$-linear combinations of elements of $F^\times$.  For $x \in F\setminus \set{0,1}$, let $\symb{x}$ denote the image of $x$ in $Z(F)$.  View the multiplicative abelian group $F^\times$ as a $\ZZ$-module, and let $\bwedgetwo F^\times$ denote the wedge product
\[\bwedgetwo F^\times = \frac{F^\times \otimes_\ZZ F^\times}{\gen{x \otimes y + y \otimes x \given x, y \in F^\times}}.\]  For $x$ and $y$ in $F^\times$, we denote the image of $x \otimes y$ in $\bwedgetwo F^\times$ by $x \wedge y$.  Note that this differs from the usual exterior product defined by $x \otimes x = 0$.  This version is denoted $(F^\times \otimes_\ZZ F^\times)_\sigma$ in Suslin's work \cite{SuslinK3}.  We follow the notation of Calegari, Garoufalidis, and Zagier \cite{cgz} and denote it $\bwedgetwo F^\times$.
\begin{df}
The \emph{Bloch group} of a field $F$ is the quotient
\[B_2(F) = A_2(F)/C_2(F),\]
where $A_2(F)$ is the kernel of the map 
\[\delta_2 \colon Z(F) \to \bwedgetwo F^\times, \quad \symb{x} \mapsto 
x \wedge (1-x),\]
and $C_2(F) \subseteq A_2(F)$ is the group generated by the \emph{five-term relation}
\[\xi_{x,y} = \symb{x} -\symb{y} + \symb*{\frac{y}{x}} - \symb*{\frac{1-x^{-1}}{1-y^{-1}}} + \symb*{\frac{1-x}{1-y}},\]
with $x$ and $y$ ranging over $F \setminus \set{0,1}$ with $x \neq y$.
\end{df}

The modification of the Bloch group follows two steps.  First, augment $A_2(F)$ by formally adding symbols $\symb{0}$, $\symb{1}$, and $\symb{\infty}$ with $\symb{1} = 0$ and $\symb{0} = -\symb{\infty} = \symb{x} + \symb{1-x}$ for any $x \in F$.(\footnote{Suslin \cite{SuslinK3} shows $\symb{x} + \symb{1-x}$ is independent of $x$.}) This yields a group that is isomorphic to $B_2(F)$.  Next, we modify the groups  to define a quotient that uses all specializations of the five term relation $\xi_{x,y}$ that do not involve indeterminate terms such as $\frac{0}{0}$ or $\frac{\infty}{\infty}$.  The issue that arises is that the element 
\[\xi_{x,1} = \symb{x}-\symb{1} + \symb*{\frac{1}{x}} - \symb*{\frac{1-x^{-1}}{0}} + \symb*{\frac{1-x}{0}} =  \symb{x} + \symb{x^{-1}}\]
does not necessarily lie in $A_2(F)$, since 
\[\delta_2(\symb{x} + \symb{x^{-1}}) = x \wedge (-x)\]
is potentially a nontrivial $2$-torsion element.  One way to account for this is to modify the exterior product.  

Let 
\[\twedgetwo F^\times = \frac{F^\times \otimes_\ZZ F^\times}{\gen{x \otimes (-x) \given x \in F^\times}}.\]
 For $x$ and $y$ in $F^\times$, we denote the image of $x \otimes y$ in $\twedgetwo F^\times$ by $\wtilde{x}{y}$.  One can easily check that 
\begin{itemize}
\item the elements $\wtilde{x}{x}$ and $\wtilde{x}{(-1)}$ are equal and
are $2$-torsion for all $x \in F^\times$;
\item $\wtilde{y}{x} + \wtilde{x}{y} = 0$ for all $x, y \in F^\times$; and 
\item $\wtilde{\zeta}{\omega}$ is $2$-torsion for any roots of
unity $\zeta, \omega  \in F^\times$.
\end{itemize}

Let $\ZZ[\PP^1(F)]$ denote the group of formal $\ZZ$-linear combinations of elements of $\PP^1(F) = F \cup \set{\infty}$.  As before, for $x \in \PP^1(F)$, let $\symb{x}$ denote the image of $x$ in $\ZZ[\PP^1(F)]$.  

\begin{df}
The \emph{modified Bloch group} of a field $F$ is the quotient
\[\tB_2(F) = \tA_2(F)/\tC_2(F),\]
where $\tA_2(F)$ is the kernel of the map 
\[\td_2 \colon \ZZ[\PP^1(F)] \to \twedgetwo F^\times, \quad \symb{x} \mapsto 
\begin{cases}
0 & \text{if $x \in \set{0, 1, \infty}$},\\
\wtilde{x}{(1-x)} & \text{otherwise},\\
\end{cases}\]
and $\tC_2(F) \subseteq \tA_2(F)$ is the group generated by specializations of the \emph{five-term relation} $\xi_{x,y}$ 
with $x$ and $y$ ranging over $\PP^1(F)$, forbidding only those terms for which the resulting fractions have the indeterminate form $\frac{0}{0}$ or $\frac{\infty}{\infty}$.
\end{df}

\begin{prop}[{\cite[Lemma 2.2]{cgz}}]
There is a surjection $B_2(F) \to \tB_2(F)$ whose kernel is $2$-torsion.
\end{prop}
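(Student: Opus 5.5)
The map I would use is the obvious one, $\phi\colon B_2(F)\to\tB_2(F)$, $\symb{x}\mapsto\symb{x}$. The plan has three parts: check it is well defined, show it is surjective, and show its kernel is killed by $2$.

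\emph{Well-definedness.} There is a natural quotient map $\pi\colon\bwedgetwo F^\times\to\twedgetwo F^\times$. It exists because $x\otimes y+y\otimes x$ lies in the subgroup generated by the elements $z\otimes(-z)$, via
\[(xy)\otimes(-xy) - x\otimes(-x) - y\otimes(-y) = x\otimes y + y\otimes x + (\text{terms } x\otimes(-1)-x\otimes(-1)),\]
up to the evident bookkeeping with $(-1)$. Since $\td_2=\pi\circ\delta_2$ on $Z(F)$, we get $A_2(F)\subseteq\tA_2(F)$. Every generator $\xi_{x,y}$ of $C_2(F)$ is in particular a specialization allowed in $\tC_2(F)$. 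Hence $\phi$ descends to the quotient. (Equivalently, one can first pass to Suslin's augmented version with symbols $\symb{0},\symb{1},\symb{\infty}$, which the paper notes is isomorphic to $B_2(F)$.)

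\emph{Surjectivity.} Let $\alpha=\sum n_i\symb{x_i}\in\tA_2(F)$. Using the degenerate five-term relations in $\tC_2(F)$, I would first show that $\symb{0}$, $\symb{1}$ and $\symb{\infty}$ are torsion or zero modulo $\tC_2$, and that $\symb{x}+\symb{1-x}$ and $\symb{x}+\symb{x^{-1}}$ are constant modulo $\tC_2$. This lets us rewrite $\alpha$ modulo $\tC_2$ as an element supported on $F\setminus\{0,1\}$, plus boundary symbols that we then eliminate.

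The real issue is that the kernel of $\pi$ is generated by elements $x\wedge(-x)$, which are $2$-torsion in $\bwedgetwo F^\times$. So $\delta_2(\alpha)$ is a $2$-torsion element lying in the span of such classes. Write $\delta_2(\alpha)=\sum_j x_j\wedge(-x_j)$. Then adding $-\sum_j(\symb{x_j}+\symb{x_j^{-1}})$, which is $0$ in $\tB_2$ because each summand is some $\xi_{x,1}\in\tC_2(F)$, produces an element of $A_2(F)$ with the same image. This needs $\delta_2(\symb{x}+\symb{x^{-1}})=x\wedge(-x)$, which the paper already computed. This correction step is the heart of the argument.

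\emph{Kernel is $2$-torsion.} Suppose $\beta\in A_2(F)$ lies in $\tC_2(F)$. Then $\beta$ is a sum of specialized five-term relations. The nondegenerate ones already lie in $C_2(F)$. I would check case by case that each degenerate specialization, such as $\xi_{x,1}$, $\xi_{x,0}$, $\xi_{x,\infty}$, $\xi_{x,x}$ and $\xi_{x,x^{-1}}$, reduces to one of a small list of elements: $0$, $\symb{x}+\symb{x^{-1}}$, $\symb{x}+\symb{1-x}$ (constant), or $\symb{x}+\symb{x^{-1}}$ twisted by $1-x$.

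It then suffices to use Suslin's facts in $B_2(F)$: $\symb{x}+\symb{x^{-1}}$ and $2(\symb{x}+\symb{1-x})$ behave as torsion classes, and $2(\symb{x}+\symb{x^{-1}})\in C_2(F)$ whenever it lies in $A_2$. With these, $2\beta\in C_2(F)$. More precisely, I would show that if a combination $\sum m_k\,\eta_k$ of degenerate relations lies in $A_2$, then twice it lies in $C_2$. This follows because $2\eta_k\in A_2$ differs from an element of $C_2$ by a controlled expression.

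\emph{Main obstacle.} I expect the case analysis of degenerate specializations to be the tedious part. The genuinely delicate point is the kernel claim: a sum of terms $\symb{x_j}+\symb{x_j^{-1}}$ may lie in $A_2(F)$ while the individual terms do not. Doubling resolves this, since each $2(\symb{x}+\symb{x^{-1}})$ lies in $A_2$ and is equivalent to $0$ in $B_2(F)$ by Suslin's lemma. That is exactly why the kernel is only claimed to be $2$-torsion.
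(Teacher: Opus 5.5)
\textbf{There is a genuine gap, in the kernel step.} Your well-definedness and surjectivity steps are fine. The identity $(xy)\otimes(-xy)-x\otimes(-x)-y\otimes(-y)=x\otimes y+y\otimes x$ holds exactly. The correction term $\sum_j(\symb{x_j}+\symb{x_j^{-1}})=\sum_j(\xi_{x_j,1}+\xi_{x_j,x_j})$ lies in $\tC_2(F)$, and subtracting it does land in $A_2(F)$. (The paper gives no argument of its own here; it only cites \cite[Lemma 2.2]{cgz}.)

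The problem is not where you locate it. Besides the elements $b_z=\symb{z}+\symb{z^{-1}}$, the degenerate relations produce the constant $c_F=\symb{x}+\symb{1-x}$. In general $2c_F\neq 0$ in $B_2(F)$: one only has $6c_F=0$. For instance, $c_{\QQ}$ has order $6$ in $B_2(\QQ)\cong\ZZ/6$; the Rogers dilogarithm shifted by $\pi^2/6$ detects this modulo $\pi^2$.

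So your claim that twice each degenerate relation lies in $C_2(F)$ ``up to a controlled expression'' already fails for $\xi_{x,0}=\symb{x}+\symb{1-x}-\symb{0}$. Also, $2\eta_k$ is not in $A_2(F)$ at all when $\eta_k$ contains boundary symbols. The $c_F$-part is governed by relations missing from your list:
\begin{itemize}
\item the purely boundary relations $\xi_{0,1}=\symb{0}+\symb{\infty}-\symb{1}$, $\xi_{0,\infty}=2\symb{0}-\symb{\infty}$ and $\xi_{\infty,0}=2\symb{\infty}-\symb{0}$;
\item the relations $\xi_{0,y}$ and $\xi_{\infty,y}$.
\end{itemize}
Conversely, $\xi_{x,x^{-1}}$, which you list, is nondegenerate for $x\neq\pm1$. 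The boundary relations force $3\symb{0}=0$ in $\tB_2(F)$, so $3c_F$ lies in the kernel. You must show that $3c_F$ is $2$-torsion in $B_2(F)$, and that no multiple $kc_F$ with $3\nmid k$ is killed.

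\textbf{One way to close the gap.} Work in $P=Z(F)/C_2(F)$.
\begin{enumerate}
\item With $v=(1-x)/(1-y)$ one has $\xi_{x,y}+\xi_{y,x}=b_{y/x}+b_v-b_{(y/x)v}$, and also $\xi_{x,1/x}+\xi_{1/x,x}=\symb{x^2}+\symb{x^{-2}}$. Hence $z\mapsto b_z$ is a homomorphism $F^\times\to P$ killed by $2$; this proves your claim $2b_z\in C_2(F)$.
\item Use the constancy of $\symb{x}+\symb{1-x}$ and reduce $\symb{x/(x-1)}$ in two different ways. This gives $3c_F=b_{1-1/x}+b_x+b_{1-x}=b_{-1}$.
\item Let $T\subseteq P$ be generated by all $b_z$. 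Define $\ZZ[\PP^1(F)]\to P/T$ by $\symb{1}\mapsto0$, $\symb{0}\mapsto c_F$, $\symb{\infty}\mapsto-c_F$, and $\symb{z}\mapsto\symb{z}$ otherwise.
\item Check directly that every degenerate specialization maps into $T$. For example, $\xi_{0,y}\mapsto b_{1-y}$, and the boundary-only relations map to $\pm3c_F=\pm b_{-1}$.
\item Hence any $\beta\in A_2(F)\cap\tC_2(F)$ has class in $T$, and since $2T=0$ we get $2\beta\in C_2(F)$.
\end{enumerate}

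Alternatively, you can do explicit bookkeeping showing that in any combination of degenerate relations with vanishing boundary, the net multiple of $c_F$ is divisible by $3$. Some argument of this kind is needed; the doubling trick alone does not supply it.
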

The extra relations that arise from including the degenerate five term relations imply that in $\tB_2(F)$, we have 
\begin{align*}
\symb{1} &= \symb{x} + \symb{x^{-1}} = 0, \quad \text{for all $x \in \PP^1(F)$},\\
\symb{0} &= -\symb{\infty} = \symb{x} + \symb{1-x}, \quad \text{for any $x \in F^\times$}.\\
\end{align*}
\subsection{The pruned Bloch group}
The Bloch group and its modification are difficult to work with in explicit calculations.  In particular, given an element representing a class in the Bloch group or its modification, there is not an efficient method of checking whether or not the element represents the trivial class.  This is because there is no algorithm known to express a linear combination representing the trivial element $0\in B_2(F)$ as a sum of five term relations. To address this, we define a further modification of $B_2(F)$ that replaces the condition to be trivial in $B_2(F)$ with a numerically testable---which has always been used in practice---condition involving the dilogarithm on elements of $\tA_2(F)$ as explained below.

Let $\Li_m(z)$ be the classical polylogarithm, a multivalued function defined by analytic continuation to $\C\setminus\{0,1\}$, of the series $\sum_{k\geq 1} {z^k}/{k^m}$ ($|z|<1$).
Let $\CL_2 \colon \PP^1(\C) \to \RR$ denote the single-valued Bloch-Wigner dilogarithm
\[\CL_2(z) = \Im(\Li_2(z)) + \log|z| \arg(1-z),\] 
 and $\CL_2(0) = \CL_2(1) = \CL_2(\infty) = 0$.  This is in line with the symmetry $\CL_2(\overline{z}) = -\CL_2(z)$, and hence $\CL_2(x)=0$ for $x\in \RR$.  Each choice of $r_2$ nonconjugate complex embeddings of $F$ induces a $\ZZ$-linear function \[\tCL_2 \colon \ZZ[\PP^1(F)] \to \RR^{r_2}, \quad \symb{x} \mapsto (\CL_2(\sigma(x)))_{\sigma}.\] 
  
The Bloch-Wigner dilogarithm vanishes on $\tC_2(F)$, i.e., \[\tC_2(F) \subset \tA_2(F) \cap \ker(\tCL_2).\]  This condition can be quickly tested numerically to high precision, and so it gives rise to a further modification of the Bloch group that is more amenable to explicit calculations in a computer.  

\begin{df}\label{def:working-bloch-group2}
The \emph{pruned Bloch group} of a number field $F$ is the quotient  
\[\wB_2(F) = \frac{\tA_2(F)}{\tA_2(F) \cap \ker(\tCL_2)},\]
where $\tA_2(F) = \ker(\td_2)$. 
\end{df}

We now give an example showing that the different Bloch groups defined above, while related, are in fact subtly different.
\begin{ex}

Consider the element $\symb{\frac{1}{2}}$.  This can be viewed as an element of $Z(F)$ or $\ZZ[\PP^1(F)]$ for any number field $F$.  

A quick calculation shows $\symb{\frac{1}{2}}$ gives a class in $B_2(F)$ for any number field $F$.  We have 
\[\delta_2(\symb*{\tfrac{1}{2}}) = \tfrac{1}{2} \wedge (1-\tfrac{1}{2}) = \tfrac{1}{2} \wedge \tfrac{1}{2} = 0.\]
On the other hand, $\td_2(\symb*{\tfrac{1}{2}}) = \wtilde{\tfrac{1}{2}}{\tfrac{1}{2}}$ is only $2$-torsion in $\tilde{\wedge}^2 F^\times$, so $\symb{\frac{1}{2}}$ does not generally give a class in $\wB_2(F)$ or $\tB_2(F)$.  However, if $2$ is a square in $F$, then $\wtilde{\tfrac{1}{2}}{\tfrac{1}{2}} = 0$, so we get a class in both $\wB_2(F)$  and $\tB_2(F)$.  In this case, we get the trivial class in $\wB_2(F)$ in view of the vanishing of $\CL_2$ on $\RR$, but it is not necessarily trivial in $\tB_2(F)$.  
\end{ex}

Finally, we turn to defining a Bloch group of weight $3$.  As in Definition~\ref{def:working-bloch-group2}, we aim for a slight modification of Zagier's generalization that is efficiently computable.

Define 
\[\beta \colon \ZZ[\PP^1(F)] \to F^\times \otimes_\ZZ (\twedgetwo F^\times), \quad \symb{x} \mapsto x \otimes \td_2(x).\]  
Each group homomorphism $\nu \colon F^\times \to \ZZ$ induces a $\ZZ$-linear map 
\[\tnu \colon \ZZ[F^\times] \to \ZZ[F^\times], \quad \symb{x} \mapsto \nu(x) \symb{x}.\]  
This extends to a function on $\ZZ[\PP^1(F)]$, which we also call $\tnu$.  Zagier \cite{ZagierTexel} shows if $\xi \in \ker(\beta)$, then $\tnu(\xi) \in \ker(\td_2)$.  In particular, $\tnu(\xi)$ defines a class in $\wB_2(F)$.  Thus we get an induced map 
\begin{equation} \label{eq:delta3}
\td_3 = \prod_\nu \tilde{\nu}\colon \ker(\beta) \to \prod_\nu \wB_2(F),
\end{equation}
where $\nu$ runs over a generating set of $\Hom_\ZZ(F^\times, \ZZ)$.  

Let $\CL_{3} \colon \PP^1(\C) \to \RR$ denote Zagier's single-valued trilogarithm,
\begin{equation}\label{trilog}
\CL_{3} (z) = \Re\big(\Li_3(z) - \log|z| \,\Li_2(z) + \frac13 \log^2|z|
\,\Li_1(z)\big),
\end{equation}
and $\CL_3(0) = \CL_3(\infty) = 0$. 
Each choice of $r_1$ real and $ r_2$ nonconjugate complex embeddings $\sigma$ of $F$ induces a $\ZZ$-linear function 
\[\tCL_3 \colon \ZZ[\PP^1(F)] \to \RR^{r_1 + r_2}, \quad \symb{x} \mapsto (\CL_3(\sigma(x)))_{\sigma}.\]

The trilogarithm satisfies classical $1$-variable functional equations $I_x$, $D_x$, and $T_x$; Gangl's functional equation $A_x$ \cite{gangl-diplom, neilsen-polylogs}; and the $2$-variable Spence-Kummer functional equation $S_{x,y}$ in the sense that 
\[\tCL_3(I_x) = \tCL_3(D_x)= \tCL_3(T_x)=\tCL_3(A_x)= \tCL_3(S_{x,y}) = 0\]
for all $x, y \in F$, $(x,y) \neq (0,0),(1,1)$.  The inversion relation $I_x$ and duplication relation $D_x$ are often called the \emph{trivial relations}.
The functional equations can be expressed as 
\[I_x = \symb{x} - \symb*{\frac{1}{x}}, \quad D_x = \symb{x^2} - 4\symb{x} - 4\symb{-x}, \quad T_x = \symb{x} + \symb{1-x} + \symb*{1 - \frac{1}{x}} - \symb{1},\]
\begin{multline*} A_x=2\bigg([x(1-x)]+\Big[-\frac{x}{(1-x)^2}\Big]+\Big[-\frac{1-x}{x^2}\Big]\bigg) \\ +  3\bigg([1-x(1-x)]+\Big[\frac{1-x(1-x)}{(1-x)^2}\Big]+\Big[\frac{1-x(1-x)}{x^2}\Big] 
\bigg)- 6[1],\end{multline*}
and 
\begin{multline*}S_{x,y} = \symb*{\frac{x(1-y)^2}{y(1-x)^2}} + \symb{xy} + \symb*{\frac{x}{y}} - 2\symb*{\frac{x(1-y)}{y(1-x)}} - 2\symb*{\frac{x(1-y)}{x-1}} \\- 2 \symb*{\frac{y(1-x)}{y-1}} -2 \symb*{\frac{1-y}{1-x}} - 2\symb{x} - 2\symb{y} + 2\symb{1}.\end{multline*}
Most importantly, the trilogarithm satisfies Goncharov's 22-term equation in three variables which, together with its degenerations, conjecturally generates all the $\tCL_3$-functional equations \cite[Section~5]{GoncharovConfigurations}.

\begin{df}\label{def:working-bloch-group3}
The \emph{pruned Bloch group of weight $3$} of a number field $F$ is the quotient 
\begin{equation}\label{def:our_B_3}
\wB_3(F) = \frac{\tA_3(F)}{\tA_3(F) \cap \ker(\tCL_3)},
\end{equation}
where $\tA_3(F) = \ker(\td_3)$.
\end{df}

The diagram below summarizes some of the maps defined so far.  The goal is to compute the top arrow to get $\Lambda_3(F) \subset \RR^{r_1 + r_2}$, the image of $\tA_3(F)$ under the trilogarithm $\tCL_3$.  The \emph{third Bloch group regulator of $F$}, denoted  $\reg_3^{\bloch}(F)$, is defined as the covolume of $\Lambda_3(F)$.
\[
\begin{tikzcd}
 \tA_3(F) \arrow[r, "\tCL_3"] \arrow[d,hook]   & \Lambda_3(F) \\
 \ker(\beta) \arrow[r, hook] \arrow[d, "\td_3"] & \ZZ[\PP^1(F)] \arrow[r, "\beta"] & F^\times \otimes_\ZZ (\twedgetwo F^\times)\\
 \prod \wB_2(F)
\end{tikzcd}
\]

\subsection{}

We now describe the quantity $w_3(F)$ that we use as a stand-in for $\size{K_5(\OO_F)_{\tors}}$.  For $m\geq 1$, 
the number $w_m(F)$ is the size of the cyclic Harris--Segal 
\cite[p.~149]{weibel-handbook5} that describes the bulk of $K_{2m-1}(\OO_F)_{{\rm tors}}$. Formally, one has $w_m(F)$ is the size of the group of \emph{higher roots of unity}, given in terms of Galois cohomology.  We have
\[w_m(F) = \size{H^0(F,\Q/\Z(m))},\]
where the right hand group is the zeroth Galois cohomology group, i.e., the invariants under $\Gal(\overline{F}/F)$) with coefficients in the $m$-th Tate twist applied to the infinite torsion group $\QQ/\ZZ$. 

Up to a power of $2$, $w_m(F)$ is the order of torsion subgroup of $K_{2m-1}(\OO_F)$, and the full group structure of $K_{2m-1}(\OO_F)$ can be described in terms of $w_m(F)$ and the number of real and complex places of $F$.
\begin{thm}[{\cite[Theorem~1, p.~140]{weibel-handbook5}}] \label{thm:Kodd}
Let $F$ be a number field with $r_1$ real places and $r_2$ complex places.  Fix an integer  weight $m \geq 2$.  Let $\OO_{F,S}$ be the ring of $S$-integers in $F$.  Then $K_{2m-1}(\OO_{F,S}) \simeq K_{2m-1}(F)$.  Furthermore,
\begin{enumerate}
\item if $r_1 = 0$, then 
\[K_{2m-1}(\OO_{F,S}) \simeq \ZZ^{r_2} \oplus \ZZ/w_m(F) \ZZ;\]
\item if $r_1 > 0$, then 
\[
K_{2m-1}(\OO_{F,S}) \simeq \begin{cases}
\ZZ^{r_2} \oplus \ZZ/w_m(F)\ZZ, & m \equiv 0 \pmod{4}\\
\ZZ^{r_1 + r_2} \oplus \ZZ/w_m(F)\ZZ, & m \equiv 1 \pmod{4}\\
\ZZ^{r_2} \oplus \ZZ/2w_m(F)\ZZ \oplus (\ZZ/2\ZZ)^{r_1 - 1}, & m \equiv 2 \pmod{4}\\
\ZZ^{r_1 + r_2} \oplus \ZZ/\tfrac{1}{2}w_m(F)\ZZ, & m \equiv 3 \pmod{4}.
\end{cases}
\]
\end{enumerate}
\end{thm}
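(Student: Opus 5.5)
This theorem is quoted from Weibel's survey; I would not reprove it from scratch. Instead I would assemble it from the standard deep inputs, taking Quillen's finite generation, Borel's rank computation and the Rost--Voevodsky norm residue theorem (Bloch--Kato) as black boxes.

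\emph{Step 1: localization.} First I would show $K_{2m-1}(\OO_{F,S}) \simeq K_{2m-1}(F)$. Quillen's localization sequence gives
\[\bigoplus_{\mathfrak p} K_{2m-1}(k_{\mathfrak p}) \to K_{2m-1}(\OO_{F,S}) \to K_{2m-1}(F) \to \bigoplus_{\mathfrak p} K_{2m-2}(k_{\mathfrak p}).\]
Here $K_{2m-2}$ of a finite field vanishes by Quillen's computation, so the right-hand map is zero. The left-hand map is also zero, since it is the $K_{2m-1}$ analogue of Soul\'e's injectivity result cited in the introduction. So the middle arrow is an isomorphism. Borel's theorem then fixes the rank: $r_2$ for $m$ even and $r_1+r_2$ for $m$ odd.

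\emph{Step 2: the torsion subgroup, prime by prime.} It remains to identify $K_{2m-1}(\OO_{F,S})_{\tors}$. For each prime $\ell$ I would enlarge $S$ to contain the primes above $\ell$, which Step 1 permits. I would then use the motivic-to-étale (Quillen--Lichtenbaum) comparison, available by Rost--Voevodsky. For odd $\ell$ this gives
\[K_{2m-1}(\OO_{F,S})\otimes\ZZ_\ell \simeq H^1_{\text{ét}}(\OO_{F,S}[1/\ell],\ZZ_\ell(m)).\]
Next, the torsion of $H^1(\ZZ_\ell(m))$ is $H^0(F,\QQ_\ell/\ZZ_\ell(m))$. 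This follows from the Bockstein sequence for $0\to\ZZ_\ell(m)\to\QQ_\ell(m)\to\QQ_\ell/\ZZ_\ell(m)\to0$, since $H^0(\QQ_\ell(m))=0$ for $m\neq0$. That group is cyclic of order the $\ell$-part of $w_m(F)$. Taking the product over odd $\ell$ gives $\ZZ/w_m(F)$ up to $2$-power torsion.

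\emph{Step 3: the prime $2$ (main obstacle).} I expect the case $\ell=2$ with $r_1>0$ to be the main obstacle. When $F$ is totally imaginary, the argument of Step 2 goes through unchanged, which yields case (1). When $r_1>0$, the $2$-adic comparison instead goes through the Rognes--Weibel computation. There the real places contribute through the cohomology of $\RR$, which is $2$-periodic in the twist, and this produces the mod-$4$ dependence on $m$.

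To handle it, I would compare the $2$-completed groups with $H^1$ of $\OO_{F,S}[1/2]$ using the Dwyer--Friedlander/Rognes--Weibel fiber square relating $K(\OO_{F,S}[1/2])$ to $K(\RR)^{r_1}$ and étale $K$-theory. Then I would read off the answer from $K_{2m-1}(\RR)^{\wedge}_2$ in the four residue classes of $m$ mod $4$:
\begin{itemize}
\item $m\equiv 2$: extra $(\ZZ/2)^{r_1-1}$ summands and a doubled cyclic factor $\ZZ/2w_m(F)$;
\item $m\equiv 3$: the cyclic factor halves to $\ZZ/\tfrac12 w_m(F)$;
\item $m\equiv 0,1$: no change from $\ZZ/w_m(F)$.
\end{itemize}
The delicate part is the bookkeeping of $2$-torsion and the extension problems in this square. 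In the paper I would simply cite \cite[Theorem~1]{weibel-handbook5} for these details.
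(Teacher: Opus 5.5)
Your proposal is correct and matches the paper, which gives no proof of its own and simply cites \cite[Theorem~1]{weibel-handbook5}; your outline is the standard argument behind that citation: Quillen localization with Soul\'e, Borel's ranks, Rost--Voevodsky for odd $\ell$, and the Rognes--Weibel treatment of the real places at $\ell=2$. Two small points of precision: in Step~1, the left-hand map vanishes because of Soul\'e's surjectivity of the boundary map $K_{2m}(F)\to\bigoplus_{\mathfrak p}K_{2m-1}(k_{\mathfrak p})$, and in Step~3 the mod-$4$ dependence on $m$ comes from the $8$-periodic groups $K_*(\RR)^{\wedge}_2$ (as you eventually use), not from the $2$-periodic Galois cohomology of $\RR$ alone.
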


Our main interest is weight $m = 3$ and the full ring of integers $\OO_F$.
\begin{cor}\label{w3F}
Let $F$ be a number field with $r_1$ real places and ring of integers $\OO_F$.  Then 
\[w_3(F) = \begin{cases}
\size{K_{5}(\OO_F)_{\tor}}, & \text{if $r_1 = 0$}\\
2\size{K_{5}(\OO_F)_{\tor}} = 2, & \text{if $r_1 > 0$.}
\end{cases}
\]
\end{cor}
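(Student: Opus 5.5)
The plan is to specialise Theorem~\ref{thm:Kodd} to $m=3$ and $S=\emptyset$, so that $\OO_{F,S}=\OO_F$, and then read off the torsion subgroup in each case.

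If $r_1=0$, case (1) of Theorem~\ref{thm:Kodd} gives $K_5(\OO_F)\simeq \ZZ^{r_2}\oplus \ZZ/w_3(F)\ZZ$. The torsion subgroup is therefore cyclic of order $w_3(F)$, which is the first line of the statement.

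If $r_1>0$, then $m=3\equiv 3 \pmod 4$, so case (2) gives $K_5(\OO_F)\simeq \ZZ^{r_1+r_2}\oplus \ZZ/\tfrac12 w_3(F)\ZZ$. Here we need $w_3(F)$ to be even for this to make sense, but that is implicit in the theorem. The torsion then has order $\tfrac12 w_3(F)$, so $w_3(F)=2\size{K_5(\OO_F)_{\tor}}$.

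The remaining claim, and the only real content, is that this common value equals $2$, i.e.\ that $w_3(F)=2$ whenever $F$ has a real embedding. We compute $w_3(F)=\size{H^0(F,\QQ/\ZZ(3))}$ one prime at a time. For a prime $\ell$, the $\ell$-part is the largest $\ell^k$ such that $\Gal(\overline F/F)$ acts trivially on $\mu_{\ell^k}^{\otimes 3}$. Equivalently, $\chi(g)^3\equiv 1 \pmod{\ell^k}$ for all $g$, where $\chi$ is the $\ell$-adic cyclotomic character.

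Take $g$ to be complex conjugation for a real place of $F$; it lies in $\Gal(\overline F/F)$ and satisfies $\chi(g)=-1$. The condition then becomes $(-1)^3=-1\equiv 1 \pmod{\ell^k}$.
\begin{itemize}
\item For $\ell$ odd this forces $k=0$.
\item For $\ell=2$ it forces $k\le 1$.
\end{itemize}
Conversely, $k=1$ at $\ell=2$ always works, since $\ZZ/2$ carries trivial action. Hence $w_3(F)=2$.

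The consequence is that $K_5(\OO_F)$ is torsion-free when $r_1>0$, consistent with the factor $\ZZ/\tfrac12 w_3(F)\ZZ=0$.

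The main thing to get right is that, for odd $\ell$, the invariants of $\QQ_\ell/\ZZ_\ell(3)$ are cut out exactly by the condition $\chi(g)^3=1$ modulo $\ell^k$ for every $g$. This is just the definition of the Tate twist action, so no obstacle beyond careful bookkeeping is expected.
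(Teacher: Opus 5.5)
Your argument is correct. The first half matches how the paper obtains the corollary: you specialise Theorem~\ref{thm:Kodd} to $m=3$ and read off the torsion as $\ZZ/w_3(F)\ZZ$ for $r_1=0$, and as $\ZZ/\tfrac12 w_3(F)\ZZ$ for $r_1>0$ since $3\equiv 3\pmod 4$. One notational quibble: under the paper's convention $S$ must contain the archimedean places, so $\OO_F$ corresponds to $S$ being exactly the archimedean places rather than $S=\emptyset$.

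The difference lies in proving $w_3(F)=2$. The paper gets it from Proposition~\ref{prop:wmF2}, which uses Voight's criterion (Lemma~\ref{lem:wmF}): $q\mid w_m(F)$ if and only if $F\supseteq\QQ(\zeta_q)^{(m)}$. It then checks that $\QQ(\zeta_4)^{(m)}=\QQ(i)$ and, for odd prime powers $q$, $\QQ(\zeta_q)^{(m)}$ are totally complex, because the odd-order group $\Gal(\QQ(\zeta_q)/\QQ(\zeta_q)^{(m)})$ cannot contain complex conjugation. Hence none of these fields embeds into a field with a real place.

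You instead evaluate $\chi^3$ directly on a complex conjugation $g\in\Gal(\overline F/F)$ attached to a real place, which forces $\ell^k\mid 2$ in one line. Both encode the same fact. $\QQ(\zeta_q)^{(m)}$ is the fixed field of the kernel of $a\mapsto a^m$ on $(\ZZ/q\ZZ)^\times\cong\Gal(\QQ(\zeta_q)/\QQ)$, and it fails to be real exactly because $a=-1$ is not in that kernel when $q>2$ and $m$ is odd.

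Your version is more self-contained. It needs only the definition $w_3(F)=\size{H^0(F,\QQ/\ZZ(3))}$ and avoids both the external lemma and the case split between $q=4$ and odd $q$. The paper's route, in exchange, sits inside a general criterion that determines exactly which prime powers divide $w_m(F)$ for arbitrary $F$, which is more than this corollary needs.
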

For an indication on how we can compute $w_3(F)$, see Section~\ref{computing_wmF} below.

\subsection{The crucial quantity $\kappa_3(F)$}

Let $F$ be a number field with signature $(r_1, r_2)$, degree $n = r_1 + 2r_2$, and discriminant $\Delta_F$. 
We expect the following quantity $\kappa_3(F)$, motivated by the conjectures of Lichtenbaum and Zagier, to be closely related to the order of $K_4(\CO_F)$. More explicitly, as in the introduction we define   $\kappa_3(F)$ to be the positive real number 
\begin{equation}\label{eq:kappa3F}
    \kappa_3(F) = \frac{\sqrt{|\Delta_F|}^{\,5} }{2^{3 r_1+5 r_2}3^{r_1+r_2}\pi^{3 r_2}}\frac{w_3(F) \zeta_F(3)}{\reg^{\bloch}_3(F)}.
\end{equation}

Using the functional equation of $\zeta_F$ and writing its first non-vanishing coefficient of the Taylor series  at $s=-2$, where it vanishes to order $r_1+r_2$, in terms of its derivative $\zeta_F^{(r_1+r_2)}$ as
\[\zeta_F^*(-2) = \frac{\zeta_F^{(r_1+r_2)}(-2)}{(r_1+r_2)!},\] we get the---slightly easier to memorize---form
\[\kappa_3(F) =(-1)^{r_1} \frac{ \zeta(2)^{r_1+r_2}}{2^{r_2}}\frac{w_3(F)\zeta_F^*(-2)}{{\reg}^{\bloch}_3(F)}\,.\]

In particular we note that if $\zeta^*_F(1-m)= \zeta_F(1-m)\not=0$, the quantity $w_{m}(F)\zeta^*_F(1-m)$ is expected to be an integer for $m\geq 0$.  Indeed, the special case $m=2$ is the famous Birch--Tate Conjecture, which has been proved for totally real abelian fields.  However if $\zeta_F(1-m)=0$, then one must divide this quantity by the (presumably transcendental) regulator $\reg^{\bloch}_m(F)$, as well as additional powers of $\pi$, to obtain a rational number.

For the convenience of the reader we reiterate the conjecture stated in the introduction.  We urge the impatient reader to turn to Section \ref{s:ex} to see explicit examples of computing $\kappa_3(F)$.

\begin{conj}\label{conj:kappa_revisit}
For a number field $F$ we have  $$\kappa_3(F) \in \Z_{\geq 1}.$$
\end{conj}

\section{Implementation details}\label{s:id}
\subsection{}
While the groups $\wB_2 (F)$ and $\wB_3 (F)$ defined above are more amenable to computation than their Bloch group counterparts,  they are still defined by infinitely many generators and relations.  In this section, we describe the techniques that we use to approximate them with finite calculation.  

\subsection{}
Recall that our  goal is to compute  $\Lambda_3(F) \subset \RR^{r_1 + r_2}$, the image of $\tA_3(F)$ under the trilogarithm $\tCL_3$.  The general computational strategy is to construct a finite rank submodule $N \subset \ZZ[\PP^1(F^\times)]$ such that the intersection $\ker(\beta\vert_{N}) \cap \tA_3(F)$ is large, in the sense that we are computationally convinced that the trilogarithm image is the full lattice $\Lambda_3(F)$.

The actual choice of $N$ is a delicate art, which we address later.  For now, fix a finite rank submodule $N \subset \ZZ[\PP^1(F^\times)]$.  The image $\beta(N)$ is a finite rank submodule of $F^\times \otimes_\ZZ (\twedgetwo F^\times)$.  Choose $N$ so that there is a large nontrivial submodule $M \subset N$ in the kernel of $\beta$. 
Here, large is in the sense that the image under $\tCL_3$ of the intersection of $M$ with $\tA_3(F)$ has rank $r_1 + r_2$.  Note that $M$ can be computed explicitly from $N$ and $\beta$.   Find linear relations among the  $\beta(\xi)$ as $\xi$ ranges over a generating set for $N$ to construct $M = \ker(\beta\vert_{N})$.  This is an algebraic calculation that can be done with exact arithmetic in a computer.

\subsection{}
Next, we restrict $\td_3$ to $M$, and turn our attention to computing the submodule of $M$ that is in the kernel of $\td_3$.  In other words, we want to compute $M_0 = M \cap \tA_3(F)$.  Note that since $M$ has finite rank, there are only finitely many $\nu \in \Hom_\ZZ(F^\times, \ZZ)$ that need to be considered in \eqref{eq:delta3}.  Specifically, let $W \subset F^\times$ be a finite rank submodule such that $\beta(M)$ is contained in $W \otimes_\ZZ (\twedgetwo F^\times)$.  Then, in the product map that defines $\td_3$, it suffices to let $\nu$ range over a finite generating set $H \subset \Hom_\ZZ(W,\ZZ)$.  In order for $\xi \in M$ to be in $\tA_3(F)$, we need that for each $\nu \in H$, $\tCL_2(\tnu(\xi)) = 0 \in \RR^{r_2}$. To this end, we let $\set{\xi_i \given i \in I}$ be a generating set for $M$.  For $\nu \in H$, use linear relations among $\set{\tCL_2(\tnu(\xi_i)) \given i \in I} \subset \RR^{r_2}$ to compute $M_0$.  This calculation requires linear algebra using approximations of trilogarithm values on a computer, so there are numerical linear algebra issues that must be taken into account.  Then $\tCL_3(M_0)$ generates a sublattice of  $\Lambda_3(F) \subset \RR^{r_1 + r_2}$.  

\subsection{}
One can use computations on subfields of $F$ to speed up calculations for $F$.  Let $X_F \subset M_0$ denote a set such that $\tCL(X_F)$ is a basis for $\Lambda_3(F)$.  Note that $X_F$ is a subset of $\ZZ[F^\times]$ of cardinality $r_1 + r_2$.  In particular, we can store $X_F$ exactly in a computer, which allows us to later calculate $\tCL_3(\wB_3(F))$ to the desired precision.  The data of $X_F$ can be used in the calculation for any field extension of $F$.  For example, suppose $F_0$ is a subfield of $F$.  Then viewing $\ZZ[\PP^1(F_0)]$ as a submodule of $\ZZ[\PP^1(F)]$, we see that for each $\xi \in X_{F_0}$, the trilogarithm image $\tCL_3(\xi)$ is a nonzero vector in $\Lambda_3(F)$.
In particular, in any choice of $N$, it is beneficial to include the elements of $X_{F_0}$ for all subfields $F_0 \subset F$.

\subsection{}
As noted above, the choice of $N$ is delicate.  For example, we see that we want to choose $N$ so that the rank of $N$ is much larger than the rank of the image $\beta_3(N)$ so that the kernel $\ker(\beta\vert_{N})$ is large,  while at the same time arranging that this kernel has at least a rank $r_1 + r_2$ intersection with $\ker(\td_3)$.  We describe a method of choosing $N$ below using exceptional $S$-units.

\begin{df}  Let $S$ be a finite set of places of $F$, including all of the archimedean places.  The ring $\OO_S$ of \emph{$S$-integers} in $F$ is the set of $x \in F$ such that $x$ is $v$-integral for all $v \not \in S$; that is 
\[\OO_S = \bigl\{ x \in F \bigm|  \text{$\norm{x}_v \leq 1$ for all $v\notin S$}\bigr\}.\]  An element $x \in F^\times$ is an \emph{$S$-unit} if both $x$ and $1/x$ are $S$-integers.  An element $x \in F^\times \setminus \set{1}$ is an \emph{exceptional $S$-unit} if both $x$ and $1-x$ are $S$-units.
\end{df}

Let $S$ be a finite set of places of $F$, including all of the archimedean places.  Let $\XX_S$ denote the set of exceptional $S$-units in $F$.  Elements of $\XX_S$ can be readily computed using software such as Magma \cite{magma} or gp/PARI \cite{PARI2}.  For example, let $\set{\zeta, x_1, x_2, \dots, x_N}$ be an $S$-unit basis, where $\zeta$ generates the roots of unity in $F^\times$.  Consider elements of the form 
\[x = \zeta^{e_0} x_1^{e_1} x_2^{e_2} \cdots x_d^{e_d},\]
for integers $e_i$.  By choosing different $e_i$ and including more places in $S$ as needed, one can quickly find many exceptional $S$-units.

\subsection{}
We can finally explain the computation of $N$.  We 
choose a finite subset $X \subseteq \XX_S$, and take $N \subset \ZZ[\PP^1(F^\times)]$ to be the submodule generated by $\set{[x] \given x \in X}$ together with $X_{F_0}$ for every subfield $F_0 \subset F$.  In practice, we take $\size{X}$ to be between $500$ and $1000$,  and sometimes increase up to $1200$ for higher degree fields.  However, there is no easy way to ensure success; in most cases experience and luck play a substantial role.

\begin{rem} We list some tips and comments below, based on the first-named author's experience computing thousands of examples:
\begin{itemize}
\item The computational hurdle in the initial linear algebra calculation stems from the size of the calculation.  If we omit the subfield information, one must find integral relations among the $\size{X}$ elements in $F^\times \otimes_\ZZ (\twedgetwo F^\times)$.  The natural basis to choose for the submodule of $F^\times \otimes_\ZZ (\twedgetwo F^\times)$ comes from the $S$-unit basis, so the linear relation calculation is occuring in a space of rank roughly $d^3$, where $d$ is the number of elements in the $S$-unit basis.
\item It is beneficial to fix a set of rational primes $S_\QQ$ and take $S$ to be the set of all the places that lie above the primes in $S_\QQ$.  This speeds up the check to see if $1-x$ is an $S$-unit, as the check can be done with valuations from the primes in $S_\QQ$ on the norm of $1-x$.  This has the downside of increasing $\size{S}$, which increases the computational effort required in the  linear algebra step.
\item We find more success when the primes $2$ and $3$ are included in $S_\QQ$.  
\item We find more success, especially when the degree of $F$ is large, when $X$ contains many elements $[x]$ with $x$ supported on fewer primes.  In other words, $x$ is an $S'$-unit and $1-x$ is an $S$-unit, where $S'$ is a proper subset of $S$.
\end{itemize}
\end{rem}
\subsection{Computing $w_m(F)$}\label{computing_wmF}
In elementary terms $w_m(F)$ is the largest integer $w$ such that for every $\sigma\in \Gal(\overline{F}/F)$ we have 
\[\sigma(\zeta_n)= \zeta_n^{\chi(\sigma)^m}\]
with the cyclotomic character $\chi \colon \Gal(\overline{F}/F) \to (\Z/w\Z)^\times$.
So for any unramified prime ideal $\mathcal P$ we get that $\chi(F_{\mathcal P}) \equiv N({\mathcal P}) \pmod w$ and the invariance under $F_{\mathcal P}$ translates into the condition $w\mid N(\mathcal P)^m-1$, whereas for ramified primes $p$ we have no constraint, and one writes $N(\mathcal P)^\infty$.
The above leads to the formula 
\[w_m(F) = \gcd_{\mathcal P} N(\mathcal P)^\infty \big(N(\mathcal P)^m-1\big)\,,\]
where $\mathcal P$ runs through all the prime ideals in $\CO_F$.  In practice, we take $\mathcal P$ to run over all primes with norm $\leq 2000$.

Voight \cite{voight-relative-class-numbers} gives an alternative characterization of $w_m(F)$ for odd weight $m$.  For a prime power $q$, let $\QQ(\zeta_q)^{(m)}$ denote unique the subfield of $\QQ(\zeta_q)$ of index $\gcd(\phi(q),m)$, that is we have $\gcd(\phi(q),m) = [\QQ(\zeta_q) : \QQ(\zeta_q)^{(m)}]$.

\begin{lem}[{\cite[Lemma~1.1]{voight-relative-class-numbers}}] \label{lem:wmF}
Let $F$ be a number field.  Let $q$ be a prime power, and let $m \geq 3$ be an odd integer.  Then $q \mid w_m(F)$ if and only if $F$ contains $\QQ(\zeta_q)^{(m)}$.
\end{lem}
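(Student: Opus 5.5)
We work directly with the description $w_m(F)=\size{H^0(F,\QQ/\ZZ(m))}$. Then $q\mid w_m(F)$ if and only if $\Gal(\overline F/F)$ acts trivially on $\mu_q^{\otimes m}$. The action on $\mu_q$ factors through the cyclotomic character
\[\chi_q\colon \Gal(\overline F/F)\to(\ZZ/q\ZZ)^\times,\]
and on $\mu_q^{\otimes m}$ an element $\sigma$ acts by $\chi_q(\sigma)^m$. So the condition becomes: the image $H_F=\chi_q(\Gal(\overline F/F))$ is contained in the kernel $K_m$ of the $m$-th power map on $G=(\ZZ/q\ZZ)^\times\cong\Gal(\QQ(\zeta_q)/\QQ)$. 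Moreover $H_F$ is exactly $\Gal(\QQ(\zeta_q)/F\cap\QQ(\zeta_q))$. By Galois theory, $H_F\subseteq K_m$ is equivalent to $F\cap\QQ(\zeta_q)\supseteq \QQ(\zeta_q)^{K_m}$, that is, to $F$ containing the fixed field of $K_m$.

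It remains to identify this fixed field with $\QQ(\zeta_q)^{(m)}$, which by definition is the unique subfield of index $\gcd(\phi(q),m)$. This needs $\size{K_m}=\gcd(\phi(q),m)$ and, for uniqueness, that $G$ has at most one subgroup of that order. Here we use that $m$ is odd.

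For $q=p^a$ with $p$ odd, $G$ is cyclic of order $\phi(q)$. In a cyclic group of order $n$, the kernel of $x\mapsto x^m$ has order $\gcd(n,m)$ and is the unique subgroup of that order. The fixed field then has index $\gcd(\phi(q),m)$ and is the unique such subfield.

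For $q=2^a$, the group $G$ is a $2$-group, so $m$ odd gives $\gcd(\phi(q),m)=1$ and $K_m$ is trivial. The condition then says $F\supseteq\QQ(\zeta_{2^a})$, which is $\QQ(\zeta_q)^{(m)}$ since the index is $1$. This handles the non-cyclic case without needing uniqueness of subgroups in general. This is exactly where the oddness of $m$ is used, and I expect it to be the only delicate point. The hypothesis $m\ge3$ plays no role beyond the statement.

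One should also check that the elementary formula of Section~\ref{computing_wmF} and the invariants description agree. We take the latter as the definition, so no extra argument is needed there.
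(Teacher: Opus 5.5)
Your proof is correct. The paper gives no argument of its own here: it quotes the lemma from Voight \cite[Lemma~1.1]{voight-relative-class-numbers}, so there is no internal proof to compare against.

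What you wrote is the standard argument. It matches how the paper applies the criterion in the proof of Proposition~\ref{prop:w3cyclo}. There, $q\mid w_3(\QQ(\zeta_n))$ is tested by asking whether $\Gal(\QQ(\zeta_n,\zeta_q)/\QQ(\zeta_n))$ has exponent dividing $3$, which is your condition $H_F\subseteq K_m$ for $F=\QQ(\zeta_n)$.

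One step deserves a line of justification: the opening equivalence between $q\mid w_m(F)$ and trivial action on $\mu_q^{\otimes m}$. Because $w_m(F)$ is finite, $H^0(F,\QQ/\ZZ(m))$ is a finite subgroup of a group abstractly isomorphic to $\QQ/\ZZ$. It is therefore cyclic and equal to the full $w_m(F)$-torsion. The $q$-torsion of $\QQ/\ZZ(m)$ is $\mu_q^{\otimes m}$, so the equivalence follows from this together with Lagrange.

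Your treatment of $q=2^a$ is the right point at which to spend the oddness of $m$. There $(\ZZ/2^a\ZZ)^\times$ need not be cyclic, and without $m$ odd the ``unique subfield of index $\gcd(\phi(q),m)$'' in the definition of $\QQ(\zeta_q)^{(m)}$ could fail to be unique.
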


We believe the following propositions are well-known.  We include the proofs for the convenience of the reader.

\begin{prop} \label{prop:wmF2}
Let $F$ be a number field with at least one real place, and let $m \geq 3$ be an odd integer.  Then $w_m(F) = 2$.
\end{prop}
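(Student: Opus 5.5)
The plan is to combine Lemma~\ref{lem:wmF} with the fact that a real embedding forces $F$ to contain no non-real abelian subfields of cyclotomic fields. By Lemma~\ref{lem:wmF}, for every prime power $q$ we have $q \mid w_m(F)$ if and only if $\QQ(\zeta_q)^{(m)} \subseteq F$. It therefore suffices to show two things: $2 \mid w_m(F)$, and no other prime power $q$ (that is, $q=4$ or $q=p^k$ with $p$ odd) divides $w_m(F)$.

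\emph{The lower bound.} For $q=2$ we have $\phi(2)=1$, so $\gcd(\phi(2),m)=1$ and $\QQ(\zeta_2)^{(m)}=\QQ\subseteq F$. Hence $2\mid w_m(F)$. Alternatively, this follows directly from the gcd formula, since $N(\mathcal P)^m-1$ is even for odd $N(\mathcal P)$ and the factor $N(\mathcal P)^\infty$ handles primes above $2$.

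\emph{The upper bound.} Fix a real embedding $\sigma\colon F\hookrightarrow\RR$. If $\QQ(\zeta_q)^{(m)}\subseteq F$, then $\sigma$ restricts to a real embedding of $K=\QQ(\zeta_q)^{(m)}$. Since $K/\QQ$ is Galois (it is a subfield of an abelian extension), $K$ is then totally real. So $K$ lies in the maximal real subfield $\QQ(\zeta_q)^+$, the fixed field of complex conjugation $c=-1\in(\ZZ/q\ZZ)^\times$. By Galois correspondence, $K$ is the fixed field of the unique subgroup $H\subseteq G=(\ZZ/q\ZZ)^\times$ of order $d=\gcd(\phi(q),m)$. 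This subgroup is unique whenever $G$ is cyclic, and for $q=2^k$ with $k\ge3$ the order $d$ is odd and divides $\phi(q)$, a power of $2$, so $d=1$. Then $K\subseteq\QQ(\zeta_q)^+$ means $c\in H$.

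The key step is that $d$ is odd, since $m$ is odd. Thus $H$ has odd order and cannot contain the element $c$ of order $2$ unless $c=1$, that is, unless $q\le 2$. Hence no prime power $q>2$ divides $w_m(F)$. In particular $4\nmid w_m(F)$, and no odd prime divides it, so $w_m(F)=2$.

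The main obstacle is ensuring the Galois-theoretic description of $\QQ(\zeta_q)^{(m)}$ is correct in all cases. This includes the non-cyclic case $q=2^k$, where I will check separately that $d=1$, so that $K=\QQ(\zeta_q)$, which is not real for $q\ge 3$. It also includes verifying that ``totally real'' follows from having a single real embedding because $K$ is normal over $\QQ$.
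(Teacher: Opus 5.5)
Your proof is correct and follows essentially the same route as the paper: both apply Lemma~\ref{lem:wmF}, obtain $2 \mid w_m(F)$ from $\QQ(\zeta_2)^{(m)}=\QQ$, and exclude every larger prime power because the index $\gcd(\phi(q),m)$ is odd, so the fixing subgroup cannot contain complex conjugation and $\QQ(\zeta_q)^{(m)}$ cannot sit inside a field with a real place. The only difference is presentational: the paper treats $q=4$ (where the field is $\QQ(i)$) separately from odd $q$, whereas you handle all $q>2$ uniformly and add an explicit check of the non-cyclic case $q=2^k$ with $k\ge 3$.
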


\begin{proof}
We use Lemma~\ref{lem:wmF}.  First, we have $2 \mid w_m(F)$.  Since $\QQ(\zeta_2) = \QQ$ and $\gcd(\phi(2),m) = 1$, we have $\QQ(\zeta_2)^{(m)} = \QQ \subseteq F$.  Next, we show $2^2 \nmid w_m(F)$.  We have $\gcd(\phi(2^2),m) = 1$ since $m$ is odd. Then $\QQ(\zeta_{2^2})^{(m)} = \QQ(i)$ cannot be contained in $F$, since $F$ has a real place.  Finally, suppose $q$ is an odd prime power.  Then $\gcd(\phi(2^2),m) = [\QQ(\zeta_q) : \QQ(\zeta_q)^{(m)}]$ is odd, so $\Gal(\QQ(\zeta_q) /\QQ(\zeta_q)^{(m)})$ cannot contain complex conjugation.  Thus $\QQ(\zeta_q)^{(m)}$ is totally complex and cannot be contained in $F$.
\end{proof}

\begin{prop} \label{prop:w3cyclo}
Let $n$ be a positive integer.
\[w_3(\QQ(\zeta_n)) = \begin{cases} 
\lcm(2,n) & \text{if $3 \nmid n$,}\\
3\lcm(2,n) & \text{if $3 \mid n$.}
\end{cases}\]
\end{prop}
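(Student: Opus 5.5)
Set $F=\QQ(\zeta_n)$. Since $\QQ(\zeta_n)=\QQ(\zeta_{\lcm(2,n)})$, we may replace $n$ by $N=\lcm(2,n)$; this does not change whether $3\mid n$. The plan is to apply Lemma~\ref{lem:wmF} with $m=3$ to each prime power $q=p^a$ and find the largest $a$ for which $\QQ(\zeta_q)^{(3)}\subseteq F$. Then $w_3(F)$ is the product over primes $p$ of these maximal $p^a$. Note that $\QQ(\zeta_q)^{(3)}$ is the subfield of $\QQ(\zeta_q)$ of index $\gcd(\phi(q),3)$.

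We first treat primes $p\neq 3$. Here $\gcd(\phi(p^a),3)$ is $1$ or $3$.

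If it is $1$, then $\QQ(\zeta_q)^{(3)}=\QQ(\zeta_q)$, and $\QQ(\zeta_q)\subseteq \QQ(\zeta_N)$ exactly when $q\mid N$. This uses the standard fact that $\QQ(\zeta_a)\subseteq\QQ(\zeta_b)$ iff $a\mid b$ when $b$ is even, or more simply the intersection and compositum rules for cyclotomic fields.

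If it is $3$, then $\QQ(\zeta_q)^{(3)}$ is a subfield of index $3$ in $\QQ(\zeta_{p^a})$. For $a\geq 2$ it strictly contains $\QQ(\zeta_{p^{a-1}})$, because the degrees are $\phi(p^a)/3$ versus $\phi(p^{a-1})=\phi(p^a)/p$ with $p\geq 5$ (or $p=2$, where $\phi(2^a)$ is a power of $2$ and case $3$ never occurs). So this field ramifies at $p$ to the full depth requiring conductor $p^a$. More concretely, any subfield of $\QQ(\zeta_{p^a})$ of degree greater than $\phi(p^{a-1})$ has conductor exactly $p^a$. Since $F=\QQ(\zeta_N)$ is abelian, the containment holds iff the conductor $p^a$ divides $N$. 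For $a=1$ the field is a nontrivial subfield of $\QQ(\zeta_p)$ (degree $(p-1)/3>1$ once $p\ge 7$), hence has conductor $p$, and again the containment holds iff $p\mid N$.

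In all cases the maximal $p$-power dividing $w_3(F)$ equals the exact power of $p$ in $N$. Using conductors of subfields of abelian fields throughout, which by Kronecker--Weber means $K\subseteq\QQ(\zeta_N)$ iff $\mathrm{cond}(K)\mid N$, makes this uniform.

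For $p=3$, $\gcd(\phi(3^a),3)$ equals $1$ if $a=1$ and $3$ if $a\ge 2$. With $q=3$ we get $\QQ(\zeta_3)$, of conductor $3$. With $q=3^{a}$ and $a\geq2$, the index-$3$ subfield of $\QQ(\zeta_{3^a})$ has degree $2\cdot 3^{a-2}$, which equals $\phi(3^{a-1})$. It is therefore exactly $\QQ(\zeta_{3^{a-1}})$, since $(\ZZ/3^a)^\times$ is cyclic and subgroups of a given index are unique. So $3^a\mid w_3(F)$ iff $\QQ(\zeta_{3^{a-1}})\subseteq F$, i.e.\ iff $3^{a-1}\mid N$ (the case $a-1=0$ being automatic, and also $a=1$ requires $3\mid N$). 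The maximal power is then $3^{v_3(N)+1}$ if $3\mid N$, and $3^{0}$ if $3\nmid N$; in the latter case $a=1$ fails and $a=2$ would need $\QQ(\zeta_3)\subseteq F$, which also fails.

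Multiplying over all $p$ gives $w_3=N$ if $3\nmid n$ and $3N$ if $3\mid n$, which is the claimed formula.

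The main obstacle is the careful conductor bookkeeping in the index-$3$ cases, namely showing that for $p\neq 3$ the index-$3$ subfield still has full conductor $p^a$, and that $p=2$ never hits this case. Alongside this, the $p=3$ ``drop by one level'' phenomenon is exactly what produces the extra factor $3$.
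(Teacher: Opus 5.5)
Your proof is correct, but it takes a different route from the paper's. The paper works directly with the characterization of $w_3(F)$ as the largest $q$ for which $\Gal(\QQ(\zeta_n,\zeta_q)/\QQ(\zeta_n))$ has exponent dividing $3$. The divisibility $\lcm(2,n)\mid w_3$ is immediate because that group is then trivial. When $3\nmid n$, adjoining $\zeta_3$ gives a group of order $2$. When $3^k$ exactly divides $n$, the group $\Gal(\QQ(\zeta_{n3^j})/\QQ(\zeta_n))$ is cyclic of order $3^j$, so $j=1$ is the maximum.

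You instead apply Lemma~\ref{lem:wmF} one prime power at a time. This turns each divisibility $p^a\mid w_3$ into the containment of an explicit subfield of $\QQ(\zeta_{p^a})$ in $\QQ(\zeta_N)$, and you decide these containments by conductors. The extra factor $3$ then appears structurally: the index-$3$ subfield of $\QQ(\zeta_{3^a})$ is $\QQ(\zeta_{3^{a-1}})$.

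The paper's argument is shorter and needs no conductor bookkeeping, since the exponent of a cyclic relative Galois group can be read off from its order. Your argument is longer but systematic. It also explicitly proves $v_p(w_3)\le v_p(N)$ for every $p\neq 3$, including $p\equiv 1\pmod 3$, where the field the lemma requires is a priori smaller than $\QQ(\zeta_{p^a})$. The paper's proof leaves this step implicit after establishing $\lcm(2,n)\mid w_3$.

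Two cosmetic slips:
\begin{itemize}
\item In the index-$3$ case for $p\neq 3$ you actually have $p\equiv 1\pmod 3$, hence $p\ge 7$ rather than $p\ge 5$. Only $p>3$ is used, so nothing breaks.
\item The parenthetical about ``the case $a-1=0$'' is vacuous when $a\ge 2$.
\end{itemize}
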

\begin{proof}
We have that $w_3(\QQ(\zeta_n))$ is the largest integer $q$ such that
the Galois group $\Gal(\QQ(\zeta_n,\zeta_q)/\QQ(\zeta_n))$ has
exponent dividing $3$.  It follows that $\lcm(2,n) \mid
w_3(\QQ(\zeta_n))$.  Suppose $3 \nmid n$.  Then
$\Gal(\QQ(\zeta_n,\zeta_3)/\QQ(\zeta_n))$ has order $2$, so $3 \nmid
w_3(\QQ(\zeta_n))$.  Now suppose $3^k \mathrel\Vert n$ for some $k >
0$.  Then $\Gal(\QQ(\zeta_n,\zeta_{3^{k+j}})/\QQ(\zeta_n))$ has
exponent $3^j$, so $j = 1$ gives $3^{k + 1}$ is the largest power of
$3$ that divides $w_3(F)$.  It follows that $w_3(\QQ(\zeta_n)) =
3\lcm(2,n)$ in this case.
\end{proof}
\section{Examples}\label{s:ex}

\subsection{} In this section we give some examples to illustrate
various features of our computational data.  We consider a range of
degrees and signatures; the largest example (Example \ref{ex:d14}) is
a Galois extension with group $D_{14}$.  

Altogether we computed $\kappa_3(F)$ for $43847$ fields of degrees
ranging from $2$ to $14$ and with all possible signature types and for
a variety of Galois closures, as well as some other fields of larger
degrees.  Table \ref{tab:range} shows how many fields of the different
degrees we treated.  Naturally for larger degrees the computations
became more difficult and we must settle for fewer examples.  We also
had to settle for imbalance in the signature types.

\begin{table}[htb]
\caption{The range of our computations.\label{tab:range}}
\centering
\begin{tabular}{|c|r||c|r||c|r|}
\hline
$n$ & Number of fields & $n$ & Number of fields & $n$ & Number of fields \\
\hline
2 & 21325 & 7 & 1554 & 12 & 505 \\
3 & 1901 & 8 & 1124 & 13 & 64 \\
4 & 2346 & 9 & 318 & 14 & 351 \\
5 & 8607 & 10 & 600 & & \\
6 & 5045 & 11 & 107 & & \\
\hline
\end{tabular}
\end{table}

\begin{ex}\label{ex:QQ}
We begin with an example over $\QQ$.  Consider $\xi = \symb{1} +2 \symb{-1} - 2 \symb{2} + 2 \symb{3} - \symb{-3} \in \ZZ[\PP^1(\QQ)]$.  Then $\td_3(\xi) = 0$, so $\xi$ gives a class in $\wB_3(\QQ)$.  We compute 
\[\tCL_3(\xi) = -0.100171408596632857116644846792620832 \dots.\] 
This value appears to equal $-{\zeta(3)}/{12}$.  To prove this it suffices, because $\tCL_3(\symb{1}) = \zeta(3)$,  to express $12 \xi + \symb{1}$ as a linear combination of functional equations.  One can compute 
\[12 \xi + \symb{1} = 4(A_{2}-T_{2}-3T_{4})\]
up to trivial functional equations: two duplication relations $4D_{2}-7D_{1}$ and two  inversion relations $8I_{4}-4I_{2}$.

Since $r_1 + r_2 = 1$, the point $\tCL_3(\xi)$ generates a finite index sublattice of $\Lambda_3(\QQ)$.  In fact, we believe the index is $1$ because extensive calculations could not improve the lattice. 
Thus, with $\kappa_3$ as in Conjecture~\ref{conj:kappa_revisit}, we have
\[\reg_3^{\bloch}(\QQ) \ceq  \frac{\zeta(3)}{12} \quad \text{and} \quad  \kappa_3(\QQ) \ceq 1.\]

Compare this result with \cite[p.~410]{ZagierTexel}, where Zagier concludes that since the trilogarithm of 
 \[\eta =4\symb{\tfrac{1}{3}}-12\symb{\tfrac{1}{6}} - 3 \symb{\tfrac{1}{8}} + 3 \symb{\tfrac{1}{36}} + 2 \symb{-\tfrac{1}{2}} - 18\symb{-\tfrac{1}{6}} + \symb{-\tfrac{1}{48}} + \symb{\tfrac{2}{9}}\] 
is (numerically) ${89}\zeta(3)/24$, the lattice he finds is the $\ZZ$-span of ${\zeta(3)}/{24}$.  Our lattice has index $2$ in Zagier's, but this does not contradict our result.  Indeed, $\beta(\eta)$ is a nontrivial $2$-torsion element in $\QQ^\times \otimes_\ZZ (\twedgetwo \QQ^\times)$, so does not give a class in our $\wB_3(\QQ)$.  The difference is that Zagier essentially tensors the codomain of $\beta$ with $\QQ$ when defining his Bloch group, thereby forfeiting any claim on torsion properties.
\end{ex}

\begin{rem}\label{rem:uppbound}
The element $\xi$ from Example~\ref{ex:QQ} gives rise to a nontrivial element of $\wB_3(F)$ for any number field $F$.  In particular, for imaginary quadratic fields, this immediately gives an upper bound on the higher Bloch group regulator $\reg_3^{\bloch}(F)$.  In many instances, we believe the bound is tight.  Explicit calculation in small cases shows that there are instances where we can do better (cf. Example~\ref{ex:neg1}).
\end{rem}

\begin{ex} \label{ex:neg1}
We now consider the imaginary quadratic field $\QQ(\sqrt{-1})$.  The element $\xi\in \ZZ[\PP^1(\QQ(\sqrt{-1}))] $ given by
$$\xi = -4\symb{\sqrt{-1} + 1}+4\symb{3}-2\symb{-3}-4\symb{-\sqrt{-1} + 1} $$ 
is in the kernel of $\beta$, and has trilogarithm image \[\tCL_3(\xi) = -0.0500857042983164285583224233963104\dots.\]
This value apparently equals $-{\zeta(3)}/{24}$.  As in Example~\ref{ex:QQ}, we use functional equations to get 
\[24\xi + \symb{1} = 2(5A_{\sqrt{-1}}+8A_{2})-4(29T_{\sqrt{-1}}+12T_{4})\]
up to trivial functional equations: two duplication relations $16D_{-2}-7D_{\sqrt{-1}}$ and three inversion relations $32I_{4}+57I_{2}+58I_{\sqrt{-1}}$.
Thus, \[\reg_3^{\bloch}(\QQ(\sqrt{-1})) \ceq \frac{\zeta(3)}{24} \quad \text{and} \quad \kappa_3(\QQ(\sqrt{-1})) \ceq 1.\]
Note that this improves the upper bound of Remark \ref{rem:uppbound} by a factor of $2$.
\end{ex}

\begin{ex}\label{ex:23}
Now we turn to a mixed signature cubic field, namely the number field \href{https://www.lmfdb.org/NumberField/3.1.23.1}{$F=\QQ(\alpha)$}, where $\alpha$ is a root of $x^3 - x^2 + 1$.  The cubic field  $\QQ(\alpha)$ has signature $(1,1)$ and discriminant $-23$.  

Consider the following elements of $\ZZ[\PP^1(\QQ(\alpha))]$:
\[
\xi_1 = 10 \symb{-1} + 6\symb{2} - 1 \symb{-3} + 2 \symb{3} 
\]
and 
\[
\xi_2 = 2\symb{-1}+\symb{\alpha^{-1}}-2\symb{\alpha^{-2}}+2\symb{-\alpha^{-3}}-2\symb{-\alpha^{-4}}+2\symb{-\alpha^{-5}}.\]

Note that $\xi_1$ is the element from $\ZZ[\PP^1(\QQ)]$ in Example~\ref{ex:QQ}, lifted to $\ZZ[\PP^1(\QQ(\alpha))]$ (the only proper subfield of $F$ is $\QQ$).  We compute 
\begin{align*}
\tCL_3(\xi_1) &= (-0.100171408596632857 \dots,-0.100171408596632857\dots)\\
\tCL_3(\xi_2) &= (
0.279197697898645190 \dots,
-0.440113074739221166 \dots)
\end{align*}
and 
\[\reg_3^{\bloch}(\QQ(\alpha)) \ceq 0.07205437331386738847588301163723266603099882\dots,\]
which gives \[\kappa_3(\QQ(\alpha)) \ceq 1.\]

This example was also treated by Zagier \cite[p.~403]{ZagierTexel}.  He gives four elements of $\ZZ[\PP^1(F)]$ that are linear combinations of $T = \set{\theta, -\theta, \theta^2, \theta^3, -\theta^4, \theta^5}$, where $\theta = -\alpha^{-1}$ is a root of $x^3 - x - 1$.  The first three are in the kernel of $\beta$, so give rise to elements in our $\wB_3(F)$.  The fourth has nontrivial $2$-torsion image in $F^\times \otimes_\ZZ (\twedgetwo F^\times)$, so does not give an element in $\wB_3(F)$.  Using the submodule generated by $\symb{x}$ for $x \in T$, we get an index $12$ sublattice of $\Lambda_3(F)$.
\end{ex}

\begin{ex}\label{ex:neg23closure}
The number $\kappa_3$ is not always $1$.  
Indeed, 
consider the number field \href{https://www.lmfdb.org/NumberField/6.0.12167.1}{$\QQ(\beta)$}, where $\beta$ is a root of $x^6 - 3x^5 + 5x^4 - 5x^3 + 5x^2 - 3x + 1$.  This is a totally complex $S_3$-Galois extension that arises as the Galois closure of the cubic field $\QQ(\alpha)$ of discriminant $-23$ in Example~\ref{ex:23}.  The field $\QQ(\beta)$ has three cubic subfields isomorphic to $\QQ(\alpha)$, coming from the three roots of $x^3 - x^2 + 1$.  It also contains the imaginary quadratic field $\QQ(\sqrt{-23})$ (in fact, $\QQ(\beta)$ is the Hilbert class field of this imaginary quadratic field).  
One can show, through computations similar to those in Example \ref{ex:neg1}, that \[\reg_3^{\bloch}(\QQ(\sqrt{-23})) \ceq \frac{\zeta(3)}{12} \quad \text{and} \quad \kappa_3(\QQ(\sqrt{-23})) \ceq 24. \]

Now consider the elements $\xi_1$ and $\xi_2$ of Example~\ref{ex:23}, lifted to $\ZZ[\PP^1(\QQ(\beta)]$.  To ease the notation, define $r, s, t \in \RR$ by 
\[
\tCL_3(\xi_1) = (r, r, r) \quad \text{and} \quad \tCL_3(\xi_2) = (s, t, t).
\]
The other incarnations of $\xi_2$ in $\ZZ[\PP^1(\QQ(\beta))]$ have trilogarithm images that are cyclic permutations of $\tCL_3(\xi_2)$.  We compute that the Bloch group regulator is 
\[\reg_3^{\bloch}(\QQ(\beta)) 
\ceq 0.05182948694033521008275197754585624579288698088 \dots\]
and \[\kappa_3(\QQ(\beta)) \ceq 24.\]

Note that since $(s, t, t)$ and its cyclic permutations lie in $\Lambda_3(\QQ(\beta))$ and $(r, r, r)$ also lies in $\Lambda_3(\QQ(\beta))$, we must have that $s + 2t$ is rationally related to $r$.  Indeed, one can verify that apparently $s + 2t = 3r$.

Compare the cubic subfield $\QQ(\alpha)$ from Example~\ref{ex:23} with $\QQ(\beta)$,
\[\reg_3^{\bloch}(\QQ(\alpha)) \ceq \abs{r(t-s)} \quad \text{and} \quad \reg_3^{\bloch}(\QQ(\beta)) \ceq \abs{r(t-s)^2}.\]

Given that $\QQ(\beta)$ is totally complex and both $2$ and $3$ are unramified and have two prime ideals above them, Poitou-Tate duality as used in \cite[page~41]{kolster-movahhedi} implies that $24$ divides the order of $K_4$.  Further calculations suggest that the order of $K_4$ is exactly $24$, which is compatible with our expectation in Question~\ref{qq}.
\end{ex}

\begin{ex}
Consider the triquadratic field
$\href{https://www.lmfdb.org/NumberField/8.0.5308416.1}{8.0.5308416.1}
= \QQ(\zeta_{24})$.  We give the result of the calculations for it and
its subfields in Table \ref{tab:qzeta24}.  In this and the following
tables, we provide LMFDB names for fields and links to their entries
(if available).

\begin{table}[htb]
\caption{The triquadratic field $\QQ(\zeta_{24})$.\label{tab:qzeta24}}
\centering
\begin{tabular}{c r r r}
\toprule
$F$ & $\reg_3^{\bloch}(F)$ & $w_3(F)$ & $\kappa_3(F)$ \\ \midrule 
\href{https://www.lmfdb.org/NumberField/1.1.1.1}{$\QQ$} & 0.1001714086 & 2 & $1$ \\ 
\href{https://www.lmfdb.org/NumberField/2.0.3.1}{$\QQ(\sqrt{-3})$} & 0.1001714086 & 18 & $1$ \\ 
\href{https://www.lmfdb.org/NumberField/2.0.4.1}{$\QQ(i)$} & 0.0500857043 & 4 & $1$ \\ 
\href{https://www.lmfdb.org/NumberField/2.0.8.1}{$\QQ(\sqrt{-2})$} & 0.0500857043 & 2 & $3$ \\ 
\href{https://www.lmfdb.org/NumberField/2.0.24.1}{$\QQ(\sqrt{-6})$} & 0.1001714086 & 2 & $23$ \\ 
\href{https://www.lmfdb.org/NumberField/2.2.8.1}{$\QQ(\sqrt{2})$} & 0.1810237108 & 2 & $2^2$ \\ 
\href{https://www.lmfdb.org/NumberField/2.2.12.1}{$\QQ(\sqrt{3})$} & 2.0612867150 & 2 & $1$ \\ 
\href{https://www.lmfdb.org/NumberField/2.2.24.1}{$\QQ(\sqrt{6})$} & 11.8241274897 & 2 & $1$ \\ 
\href{https://www.lmfdb.org/NumberField/4.0.144.1}{$\QQ(\zeta_{12})$} & 0.2576608394 & 36 & $2^2$ \\ 
\href{https://www.lmfdb.org/NumberField/4.0.256.1}{$\QQ(\zeta_{8})$} & 0.0226279639 & 8 & $2^4 \cdot 3$ \\ 
\href{https://www.lmfdb.org/NumberField/4.0.576.1}{$\QQ(\sqrt{-2},\sqrt{-3})$} & 2.9560318724 & 18 & $2 \cdot 3$ \\ 
\href{https://www.lmfdb.org/NumberField/4.0.576.2}{$\QQ(\sqrt{2},\sqrt{-3})$} & 0.1810237108 & 18 & $2^2 \cdot 23$ \\ 
\href{https://www.lmfdb.org/NumberField/4.0.2304.1}{$\QQ(\sqrt{-2},\sqrt{3})$} & 1.0306433575 & 2 & $3 \cdot 23$ \\ 
\href{https://www.lmfdb.org/NumberField/4.0.2304.2}{$\QQ(i,\sqrt{6})$} & 1.4780159362 & 4 & $2^2 \cdot 23$ \\ 
\href{https://www.lmfdb.org/NumberField/4.4.2304.1}{$\QQ(\zeta_{24})^+ = \QQ(\sqrt{2}, \sqrt{3})$} & 109.9247327127 & 2 & $2^4$ \\ 
\href{https://www.lmfdb.org/NumberField/8.0.5308416.1}{$\QQ(\zeta_{24})$} & 3.4351478973 & 72 & $2^8 \cdot 3 \cdot 23$ \\ 
\bottomrule
\end{tabular}
\end{table}
\end{ex}

\begin{ex}\label{ex:d4sqrt5}
Consider the $D_4$-Galois extension $F_{8} = \QQ [x]/ (x^8 - x^7 + 8x^6 + 7x^5 + 19x^4 + 7x^3 + 8x^2 - x + 1)$, given in
LMFDB as 
\href{https://www.lmfdb.org/NumberField/8.0.81450625.1}{8.0.81450625.1}.
We give the result of the calculations for it and its subfields in Table \ref{tab:d4.5s}.

\begin{table}[htb]
    \caption{The $D_4$-extension \href{https://www.lmfdb.org/NumberField/8.0.81450625.1}{8.0.81450625.1}.}
    \label{tab:d4.5s}
    \centering
\begin{tabular}{c r r r}
\toprule
$F$ & $\reg_3^{\bloch}(F)$ & $w_3(F)$ & $\kappa_3(F)$ \\
\midrule
\href{https://www.lmfdb.org/NumberField/1.1.1.1}{$\QQ$} & 0.1001714086 & 2 & $1$ \\ 
\href{https://www.lmfdb.org/NumberField/2.0.19.1}{$\QQ(\sqrt{-19})$} & 0.1001714086 & 2 & 11 \\ 
\href{https://www.lmfdb.org/NumberField/2.0.95.1}{$\QQ(\sqrt{-95})$} & 0.1001714086 & 2 & $2^3 \cdot 3 \cdot 5 \cdot 7$ \\ 
\href{https://www.lmfdb.org/NumberField/2.2.5.1}{$\QQ(\sqrt{5})$} & 0.0079780111 & 2 & $5^2$ \\ 
\href{https://www.lmfdb.org/NumberField/4.0.1805.1}{$\QQ(\sqrt{-2+2\sqrt{-19}})$} & 0.5860457330 & 2 & $5 \cdot 11$ \\ 
\href{https://www.lmfdb.org/NumberField/4.0.9025.1}{$\QQ(\sqrt{5},\sqrt{-19})$} & 0.0079780111 & 2 & $2^3 \cdot 3 \cdot 5^3 \cdot 7 \cdot 11$ \\ 
\href{https://www.lmfdb.org/NumberField/4.2.475.1}{$\QQ(\sqrt{-1+2\sqrt{5}})$} & 0.0093349578 & 2 & $5^4$ \\ 
\href{https://www.lmfdb.org/NumberField/8.0.81450625.1}{$F_8$} & 0.0109227020 & 2 & $2^3 \cdot 3 \cdot 5^7 \cdot 7 \cdot 11$\\ 
\bottomrule
\end{tabular}
\end{table}

We note that the regulators here are surprisingly small, and thus the
computed $\kappa_3$ are very large.  We expect this is a consequence
of the non-trivial $5$-torsion in the Bloch group for $\QQ(\sqrt{5})$,
which propagates to overfields since we have killed lots of torsion when building the pruned Bloch group.
Thus one might expect there to be a discrepancy between these values
and the true order of $K_4$.  On the other hand, we note for instance
that both 7 and 11 divide $144375000 = 2^3\cdot 3 \cdot 5^7 \cdot 7
\cdot 11$, so we expect these primes to appear in
$\size{K_4(\OO_{F_8})}$.  We also note that the field $\QQ(\sqrt{5})$
is the totally real subfield of the cyclotomic field $\QQ(\zeta_5)$;
see Section~\ref{s:cyclo} for more discussion about the influence of
cyclotomic fields on the discrepancy between $\kappa_3$ and the order
of $K_4$.
\end{ex}

\begin{ex}
Consider the $D_4$-Galois extension $F_{8} = \QQ [x]/ (x^8 + 13x^6 +
189x^4 + 208x^2 + 256)$, given in LMFDB as
\href{https://www.lmfdb.org/NumberField/8.0.193220905761.2}{8.0.193220905761.2}.
For this field $F_{8}$ and its subfields, we have $w_3 = 2$.  Thus we have
omitted that column in Table \ref{tab:d4.6481} to save space.  Note that the integer $\kappa_3(F_8)$
factors as $41685792 = 2^5\cdot 3\cdot 67 \cdot 6481$.

\begin{table}[htb]
\caption{The $D_{4}$-extension $F_8$ with LMFDB entry \href{https://www.lmfdb.org/NumberField/8.0.193220905761.2}{8.0.193220905761.2}.}
\label{tab:d4.6481}
\centering
\begin{tabular}{c r r}
\toprule
$F$ & $\text{reg}_3^{\bloch}(F)$ & $\kappa_3(F)$ \\
\midrule
 \href{https://www.lmfdb.org/NumberField/1.1.1.1}{$\QQ$} & 0.1001714086 & $1$ \\ 
 \href{https://www.lmfdb.org/NumberField/2.0.51.1}{$\QQ(\sqrt{-51})$} & 0.1001714086 &  $2 \cdot 67$ \\ 
 \href{https://www.lmfdb.org/NumberField/2.0.663.1}{$\QQ(\sqrt{-663})$} & 0.1001714086 &  $2^4 \cdot 6481$ \\ 
 \href{https://www.lmfdb.org/NumberField/2.2.13.1}{$\QQ(\sqrt{13})$} & 0.7735819184 &  $3$ \\ 
 \href{https://www.lmfdb.org/NumberField/4.0.33813.3}{$\QQ(\sqrt{11+2\sqrt{-51}})$} & 361.8336613633 &  $2 \cdot 67$ \\ 
\href{https://www.lmfdb.org/NumberField/4.0.439569.3}{ $\QQ(\sqrt{13},\sqrt{-51})$} & 0.7735819184 &  $2^5 \cdot 3 \cdot 67 \cdot 6481$ \\ 
 \href{https://www.lmfdb.org/NumberField/4.2.8619.1}{$\QQ(\sqrt{22 + 10\sqrt{13}})$} & 2794.2901254002 &  $3$ \\ 
\href{https://www.lmfdb.org/NumberField/8.0.193220905761.2}{$F_{8}$} & 10093381.3465288987 & $2^5 \cdot 3 \cdot 67 \cdot 6481$ \\ 
\bottomrule
\end{tabular}
\end{table}
\end{ex}

\begin{ex}\label{ex:f5}
Let $F_{5}$ be the Frobenius group of order $20$, given by the
semidirect product $C_{5}\rtimes C_{4}$.  In Table \ref{tab:frob} we
give data for the $F_5$-Galois extension $F_{20}$ defined by the polynomial 
\begin{multline*}
x^{20} - 4x^{19} + 11x^{18} - 14x^{17} + 27x^{16} - 4x^{15} + 76x^{14} + 26x^{13} + 242x^{12} \\+ 186x^{11} + 1176x^{10} + 1496x^9 + 3207x^8 + 3946x^7 + 4941x^6 \\+ 3726x^5 + 4572x^4 + 2746x^3 + 2191x^2 + 686x + 641
\end{multline*} with LMFDB entry 
$\href{https://www.lmfdb.org/NumberField/20.0.146746349255915802001953125.1}{20.0.146746349255915802001953125.1}$
and its subfields.  For all the fields in this table, we have
$w_{3}=2$, so we have
omitted that column to save space.  We have also truncated the
regulator values; the actual values computed have much higher precision.  As in Example \ref{ex:d4sqrt5}, since this field contains $\QQ(\sqrt{5})$ as a subfield, we expect there to be a discrepancy between $\kappa_3(F_{20})$ and $\size{K_4(\OO_{F_{20}})}$ that includes a nontrivial power of $5$.
\begin{table}[htb]
\caption{The $F_{5}$-extension $F_{20}$ with LMFDB entry \href{https://www.lmfdb.org/NumberField/20.0.146746349255915802001953125.1}{20.0.146746349255915802001953125.1}.}
\label{tab:frob}
\centering
\begin{tabular}{c r r}
\toprule
$F$ & $\text{reg}_3^{\bloch}(F)$ & $\kappa_3(F)$ \\
\midrule
\href{https://www.lmfdb.org/NumberField/1.1.1.1}{$\QQ$} & 0.10017 & $1$ \\ 
\href{https://www.lmfdb.org/NumberField/2.2.5.1}{$\QQ(\sqrt{5})$} & 0.00797 & $5^2$ \\ 
\href{https://www.lmfdb.org/NumberField/4.0.171125.1}{$\QQ(\sqrt{-370+74\sqrt{5}})$} & 0.00797 & $2^{13} \cdot 5^{3} \cdot 113 \cdot 12281$ \\ 
\href{https://www.lmfdb.org/NumberField/5.1.171125.1}{$F_{5}$} & 1149.69233 & $2^3 \cdot 3 \cdot 5$ \\ 
\href{https://www.lmfdb.org/NumberField/10.2.146418828125.1}{$F_{10}$} & 126110.61077 & $2^6 \cdot 3 \cdot 5^6$ \\ 
\href{https://www.lmfdb.org/NumberField/20.0.146746349255915802001953125.1}{$F_{20}$} & 221496113886.93678 & $2^{13} \cdot 3^4 \cdot 5^{11} \cdot 113 \cdot 12281$\\ 
\bottomrule
\end{tabular}
\end{table}
\end{ex}

\begin{ex}\label{ex:d14}
We conclude the examples with a very large one: 
the $D_{14}$-Galois extension $F_{28}$ defined by the polynomial 
\begin{multline*}
x^{28}-7x^{27}+24x^{26}-57x^{25}+109x^{24}-174x^{23}+237x^{22}-293x^{21}\\+328x^{20}-299x^{19}+204x^{18}-102x^{17}+12x^{16}+121x^{15}-351x^{14}+612x^{13}\\-659x^{12}+349x^{11}+66x^{10}-269x^9+268x^8-206x^7+122x^6-17x^5\\-39x^4+24x^3-x^2-3x+1.
\end{multline*}
This field has discriminant 
\[395652986399381320581034818847689 = 3^{14} \cdot 71^{14}.\]  
We give the results of the calculations for it and its subfields in
Table \ref{tab:d14}.  Note that this field does not appear in LMFDB,
although all its proper subfields do.  For $F_{28}$ the Bloch group regulator $\reg_3^{\bloch}(F_{28}) = R$ is
too big to fit in the table; we have 
\[
R=16026302439853319969014775271078.1524626966 \dotsb .
\]

\begin{table}[htb]
\caption{A $D_{14}$-extension (not in LMFDB)}
\label{tab:d14}
\centering
\begin{tabular}{c r r r}
\toprule
$F$ & $\text{reg}_3^{\bloch}(F)$ & $w_3(F)$ & $\kappa_3(F)$ \\
\midrule
\href{https://www.lmfdb.org/NumberField/1.1.1.1}{$\QQ$} & 0.10017 & 2 & $1$ \\
\href{https://www.lmfdb.org/NumberField/2.0.3.1}{$\QQ(\sqrt{-3})$} & 0.10017 & 18 & $1$ \\ 
\href{https://www.lmfdb.org/NumberField/2.0.71.1}{$\QQ(\sqrt{-71})$} & 0.10017 & 2 & $2^3 \cdot 3 \cdot 17$ \\ 
\href{https://www.lmfdb.org/NumberField/2.2.213.1}{$\QQ(\sqrt{213})$} & 2431.72430 & 2 & $1$ \\ 
\href{https://www.lmfdb.org/NumberField/4.0.45369.1}{$\QQ(\sqrt{-3}, \sqrt{-71})$} & 1215.86215 & 18 & $2^4 \cdot 3 \cdot 17$ \\ 
\href{https://www.lmfdb.org/NumberField/7.1.357911.1}{$F_7$} & 243.25718 & 2 & $1$ \\ 
\href{https://www.lmfdb.org/NumberField/14.0.9095120158391.1}{$F_{14,a}$} & 590728.02894 & 2 & $2^3 \cdot 3 \cdot 17$ \\ 
\href{https://www.lmfdb.org/NumberField/14.0.280155320935227.1}{$F_{14,b}$} & 11500541184526.08498 & 18 & $1$ \\ 
\href{https://www.lmfdb.org/NumberField/14.2.19891027786401117.1}{$F_{14,c}$} & 279182912219085362.08643 & 2 & $1$ \\ 
$F_{28}$ & $R$ & 18 & $2^4 \cdot 3 \cdot 17$ \\ 
\bottomrule
\end{tabular}
\end{table}

Figure \ref{fig:field_diagram} shows the lattice of subfields of
$F_{28}$ together with the quantities $(w_{3}, \kappa_{3})$.  The
times symbol $\times$ is used to abbreviate multiple isomorphic copies
of the same field.  Thus for instance $F_{7}\times 7$ indicates that
there are in fact $7$ isomorphic copies of the field labeled $F_{7}$
appearing as subfields.  The $\times 7$ on the poset edge indicates
that all seven of these appear as subfields of $F_{14,a}$. 
 Note that from this picture we can easily see that whenever $E\subset F$, we have $w_3(F)\mid w_3(E)$, and also that $\kappa_3(F)\mid \kappa_3(E)$ as in Question \ref{qq}.

\begin{figure}[htb]
  \begin{tikzpicture}[auto, node distance=2cm]
    \node (e) [align=center] {\href{https://www.lmfdb.org/NumberField/2.0.3.1}{$\QQ$}\\$(2,1)$};
    \node[above of=e] (2neg71) [align=center]{\href{https://www.lmfdb.org/NumberField/2.0.71.1}{$\QQ(\sqrt{-71})$}\\$(2,408)$};
    \node[left of=2neg71, node distance=3cm] (2neg3) [align=center] {\href{https://www.lmfdb.org/NumberField/2.0.3.1}{$\QQ(\sqrt{-3})$}\\$(18,1)$};
    \node[right of=2neg71, node distance=3cm] (2pos213) [align=center]{\href{https://www.lmfdb.org/NumberField/2.2.213.1}{$\QQ(\sqrt{213})$}\\$(2,1)$};
    \node at (0.75,4) (4) [align=center]{\href{https://www.lmfdb.org/NumberField/4.0.45369.1}{$\QQ(\sqrt{-3}, \sqrt{-71})$}\\$(18,816)$};
   \node at (-3,6) (7) [align=center]{\href{https://www.lmfdb.org/NumberField/7.1.357911.1}{$F_7 \times 7$}\\$(2,1)$};
    \node[above of=7, node distance=4cm] (14a) [align=center]{\href{https://www.lmfdb.org/NumberField/14.0.9095120158391.1}{$F_{14,a}$}\\$(2,408)$};
    \node[left of=14a, node distance=3cm] (14b) [align=center]{\href{https://www.lmfdb.org/NumberField/14.0.280155320935227.1}{$F_{14,b} \times 7$}\\$(18,1)$};
    \node[right of=14a, node distance=3cm] (14c) [align=center]{\href{https://www.lmfdb.org/NumberField/14.2.19891027786401117.1}{$F_{14,c} \times 7$}\\$(2,1)$};
    \node[above of=14a] (28) [align=center] {$F_{28}$\\$(18,816)$};
    \draw[-] (e) to (2neg3);
    \draw[-] (e) to (2neg71);
    \draw[-] (e) to (2pos213);
    \draw[-] (e) to (7);
    \draw[-] (4) to (2neg3);
    \draw[-] (4) to (2neg71);
    \draw[-] (4) to (2pos213);
    \draw[-] (4) to (28);
    \draw[-] (14a) to node[left] {$\times 7$} (7);
    \draw[-] (14a) to (2neg71);
    \draw[-] (14b) to (2neg3);
    \draw[-] (14c) to (2pos213);
    \draw[-] (14b) to (7);
    \draw[-] (14c) to (7);
    \draw[-] (14a) to (28);
    \draw[-] (14b) to (28);
    \draw[-] (14c) to (28);
  \end{tikzpicture}
\caption{The lattice of subfields of
$F_{28}$ together with the quantities $(w_{3}, \kappa_{3})$.  The
times symbol $\times$ is used to abbreviate multiple isomorphic copies
of the same field, or omitted multiple edges in the poset.}\label{fig:field_diagram}
\end{figure}

\end{ex}

\clearpage

\section{The cyclotomic case}\label{s:cyclo}
In the cyclotomic case $F=\QQ(\zeta_n)$, we encounter factors of $\kappa_3(F)$ which are not part of $K_4(\OO_F)$.  For example, the work of Rognes--\O stv\ae r \cite{rognes-ostaer} shows that the order of $K_4$ for the ring of integers of $\QQ(\zeta_{2^k})$ and for its maximal totally real subfield are odd, but our calculations have $\kappa_3$ values that are even.  See Table~\ref{tab:cyclotomic}.  
In our calculations for $n=p$ prime, we have  factors $p^{(p-1)/2}$ in $\kappa_3(F)$ which we know are not part of $K_4(\OO_F)$.  If $p$ is an odd regular prime, then $K_4(\ZZ[\zeta_p])$ has no $p$-torsion  \cite[p.~170, Example~75]{weibel-handbook5}.  This includes all odd primes $p < 100$ except $37$, $59$, and $67$.  Furthermore, in the associated totally real subfield of $\QQ(\zeta_p)$ in this range, we still encounter this factor of $p^{(p-1)/2}$ in $\kappa_3$.   

\begin{table}[htb]
\caption{Comparing $\kappa_3$ for some cyclotomic fields $F = \QQ(\zeta_n)$ and their totally real subfields $F^+ = \QQ(\zeta_n + \zeta_n^{-1})$} 
\label{tab:cyclotomic}
\centering
\setlength{\tabcolsep}{7pt} 
\begin{tabular}{r r r r}
\toprule
$n$ & $\abs{\Delta_{F}}$ & $\kappa_3(F)$ & $\kappa_3(F^+)$\\
\midrule
\href{https://www.lmfdb.org/NumberField/2.0.3.1}{$3,6$} & $3$ & $1$ & $1$\\
\href{https://www.lmfdb.org/NumberField/2.0.4.1}{$4$} & $2^2$ & $1$ & $1$\\
\href{https://www.lmfdb.org/NumberField/4.0.125.1}{$5,10$} & $5^3$ & $5^2$ & $5^2$ \\
\href{https://www.lmfdb.org/NumberField/6.0.16807.1}{$7, 14$} & $7^5$ & $2^3 \cdot 7^3$ & $7^3$\\
\href{https://www.lmfdb.org/NumberField/4.0.256.1}{$8$} & $2^8$ & $2^4 \cdot 3$ & $2^2$\\
\href{https://www.lmfdb.org/NumberField/6.0.19683.1}{$9, 18$} & $3^9$ & $3^3 \cdot 13$ & $3^3$\\
\href{https://www.lmfdb.org/NumberField/10.0.2357947691.1}{$11, 22$} & $11^9$ & $3 \cdot 11^5 \cdot 1301$ & $11^5$\\ 
\href{https://www.lmfdb.org/NumberField/4.0.144.1}{$12$} & $2^4 \cdot 3^2$ & $2^2$ & $1$\\
\href{https://www.lmfdb.org/NumberField/12.0.1792160394037.1}{$13, 26$} & $13^{11}$ & $3^5 \cdot 13^6 \cdot 37 \cdot 61$ & $3 \cdot 13^6$ \\ 
\href{https://www.lmfdb.org/NumberField/8.0.1265625.1}{$15, 30$} & $3^4 \cdot 5^6$ & $2^4\cdot 5^4$ & $5^4$\\
\href{https://www.lmfdb.org/NumberField/8.0.16777216.1}{$16$} & $2^{24}$ & $2^{12} \cdot 3 \cdot 137$ & $2^8$\\
\href{https://www.lmfdb.org/NumberField/16.0.2862423051509815793.1}{$17,34$} & $17^{15}$ & $2^4 \cdot 17^8 \cdot 18913 \cdot 32609$ & $2^3 \cdot 17^8$ \\
\href{https://www.lmfdb.org/NumberField/18.0.5480386857784802185939.1}{$19, 38$} & $19^{17}$ & $11 \cdot 19^9 \cdot 1297 \cdot 3187 \cdot 3559$ & $19^9$ \\
\href{https://www.lmfdb.org/NumberField/8.0.4000000.1}{$20$} & $2^8 \cdot 5^6$ & $2^4 \cdot 3 \cdot 5^5$ & $5^4$ \\ 
\href{https://www.lmfdb.org/NumberField/12.0.205924456521.1}{$21, 42$} & $3^6 \cdot 7^{10}$ & $2^4 \cdot 7^7 \cdot 43$ & $7^6$ \\ 
\href{https://www.lmfdb.org/NumberField/22.0.39471584120695485887249589623.1}{$23, 46$} & $23^{21}$ & $2^3 \cdot 3 \cdot 23^{11} \cdot 251461857575303$ & $23^{11}$ \\ 
\href{https://www.lmfdb.org/NumberField/8.0.5308416.1}{$24$} & $2^{16} \cdot 3^4$ & $2^8 \cdot 3 \cdot 23$ & $2^4$ \\
\href{https://www.lmfdb.org/NumberField/20.0.2910383045673370361328125.1}{$25, 50$} & $5^{35}$ & $5^{20} \cdot 41 \cdot 61 \cdot 2141 \cdot 154081$ & $5^{18}$ \\ 
\href{https://www.lmfdb.org/NumberField/18.0.2954312706550833698643.1}{$27, 54$} & $3^{45}$ & $3^{18} \cdot 13 \cdot 19 \cdot 2467 \cdot 52057$ & $3^{15}$ \\ 
\href{https://www.lmfdb.org/NumberField/12.0.1157018619904.1}{$28$} & $2^{12} \cdot 7^{10}$ & $2^9 \cdot 3 \cdot 7^6 \cdot 1153$ & $3 \cdot 7^6$\\
\href{https://www.lmfdb.org/NumberField/28.0.3053134545970524535745336759489912159909.1}{$29, 58$}  & $29^{27}$ & $2^3 \cdot 29^{14} \cdot 1297 \cdot 1373 \cdot  32369 \cdot 471402989989$ & $29^{14}$  \\
\bottomrule
\end{tabular}
\end{table}

The discrepancy between $\kappa_3$ of a cyclotomic field and its maximal totally real subfield is related to generalized Bernoulli numbers $B_{3,\chi}$ for Dirichlet characters $\chi$.  An analogous calculation investigating the ratio of orders of $K_4$ for a CM extension and its totally real subfield has been done by Voight \cite{Voight}, where this ratio is higher relative  class number. The Dedekind zeta function of a cyclotomic field $\QQ(\zeta_n)$ is the product of $L$-functions for the Dirichlet characters of modulus $n$.  The ratio of Dedekind zeta functions for the cyclotomic field and its maximal totally real subfield is the product of $L$-functions for the odd Dirichlet characters of modulus $n$.  Thus from \eqref{eq:kappa3F}, we get 
\[\frac{\kappa_3(\QQ(\zeta_n))}{\kappa_3(\QQ(\zeta_n)^+)} = \frac{1}{(-2)^{\phi(n)/2}} \left(\prod_{\text{$\chi$ odd}} L(-2,\chi) \right) \frac{\reg^{\bloch}_3(\QQ(\zeta_n)^+)}{\reg^{\bloch}_3(\QQ(\zeta_n))} \frac{w_3(\QQ(\zeta_n))}{w_3(\QQ(\zeta_n)^+)},\]
where $\phi(n) = \deg(\QQ(\zeta_n))$.  The generalized Bernoulli number is $B_{3,\chi} = -3L(-2,\chi)$. 
Thus 
\begin{equation}\label{eq:kapparatio}
\frac{\kappa_3(\QQ(\zeta_n))}{\kappa_3(\QQ(\zeta_n)^+)} = \frac{1}{(-2\cdot 3)^{\phi(n)/2}} \left(\prod_{\text{$\chi$ odd}} B_{3,\chi} \right) \frac{\reg^{\bloch}_3(\QQ(\zeta_n)^+)}{\reg^{\bloch}_3(\QQ(\zeta_n))} \frac{w_3(\QQ(\zeta_n))}{w_3(\QQ(\zeta_n)^+)}.    
\end{equation}

Since the cyclotomic field is a CM extension of its maximal totally real subfield, the lattices $\Lambda_3(\QQ(\zeta_n))$ and $\Lambda_3(\QQ(\zeta_n)^+)$ have the same rank, and we do not expect the index to be large. Furthermore,  $w_3(\QQ(\zeta_n)^+) = 2$ by Proposition~\ref{prop:wmF2}, and $w_3(\QQ(\zeta_n)) = \lcm(2,n)$ or $3\lcm(2,n)$.  
Thus we expect any large prime discrepancy between $\kappa_3(\QQ(\zeta_n))$ and $\kappa_3(\QQ(\zeta_n)^+)$ to come from the numerators of generalized Bernoulli numbers $B_{3,\chi}$.  See Table~\ref{tab:cyclotomic-reg-ratio}.
\begin{table}[htb]
\caption{The product the third and fourth columns divided by the second column is typically 1, and, if not, only involves a small product of small primes.  Thus any large primes appearing in the ratio of $\kappa_3$'s in \eqref{eq:kapparatio} must come from the numerator of the product of generalized Bernoulli numbers.} 
\label{tab:cyclotomic-reg-ratio}
\centering
\setlength{\tabcolsep}{7pt} 
\begin{tabular}{r r r r r}
\toprule
$n$ & denom $\frac{1}{(-2\cdot 3)^{\phi(n)/2}} \left(\prod_{\text{$\chi$ odd}} B_{3,\chi} \right)$ & $\frac{\reg^{\bloch}_3(\QQ(\zeta_n)^+)}{\reg^{\bloch}_3(\QQ(\zeta_n))}$ & $\frac{w_3(\QQ(\zeta_n))}{w_3(\QQ(\zeta_n)^+)}$\\
\midrule
\href{https://www.lmfdb.org/NumberField/2.0.3.1}{$3,6$} & $3^2$ & $1$ & $3^2$\\
\href{https://www.lmfdb.org/NumberField/2.0.4.1}{$4$} &  $2^2$ & $2$ & $2$\\
\href{https://www.lmfdb.org/NumberField/4.0.125.1}{$5,10$} &  $5$ & $1$ & $5$ \\
\href{https://www.lmfdb.org/NumberField/6.0.16807.1}{$7, 14$} & $7$ & $1$ & $7$\\
\href{https://www.lmfdb.org/NumberField/4.0.256.1}{$8$} & $2^3$ & $2^3$ & $2^2$\\
\href{https://www.lmfdb.org/NumberField/6.0.19683.1}{$9, 18$} & $3^3$ & $1$ & $3^3$\\
\href{https://www.lmfdb.org/NumberField/10.0.2357947691.1}{$11, 22$} & $11$ & $1$ & $11$\\\
\href{https://www.lmfdb.org/NumberField/4.0.144.1}{$12$} & $2^2 \cdot 3^2$ & $2^3$ & $2 \cdot 3^2$\\
\href{https://www.lmfdb.org/NumberField/12.0.1792160394037.1}{$13, 26$} & $13$ &  $1$ & $13$\\\ 
\href{https://www.lmfdb.org/NumberField/8.0.1265625.1}{$15, 30$} & $3^2 \cdot 5$ &  $2$ & $3^2 \cdot 5$\\
\href{https://www.lmfdb.org/NumberField/8.0.16777216.1}{$16$} & $2^4$ & $2^5$ & $2^3$\\
\href{https://www.lmfdb.org/NumberField/16.0.2862423051509815793.1}{$17,34$} & $17$ & $1$ & $17$\\
\href{https://www.lmfdb.org/NumberField/18.0.5480386857784802185939.1}{$19, 38$} & $19$ & $1$ & $19$\\
\href{https://www.lmfdb.org/NumberField/8.0.4000000.1}{$20$} & $2^2$ & $2^5$ & $2 \cdot 5$\\ 
\href{https://www.lmfdb.org/NumberField/12.0.205924456521.1}{$21, 42$} & $3^2$ & $2$ & $3^2 \cdot 7$\\ 
\href{https://www.lmfdb.org/NumberField/22.0.39471584120695485887249589623.1}{$23, 46$} & $23$ & $1$ & $23$\\ 
\href{https://www.lmfdb.org/NumberField/8.0.5308416.1}{$24$} & $2^3 \cdot 3$ & $2^5$ & $2^2 \cdot 3^2$\\
\href{https://www.lmfdb.org/NumberField/20.0.2910383045673370361328125.1}{$25, 50$} & $5^2$ &  $5^2$ & $5^2$\\ 
\href{https://www.lmfdb.org/NumberField/18.0.2954312706550833698643.1}{$27, 54$} & $3^4$ & $3^3$ & $3^4$\\ 
\href{https://www.lmfdb.org/NumberField/12.0.1157018619904.1}{$28$} & $7$ & $2^7$ & $2 \cdot 7$\\
\href{https://www.lmfdb.org/NumberField/28.0.3053134545970524535745336759489912159909.1}{$29, 58$}  &  $29$ & $1$ & $29$\\
\bottomrule
\end{tabular}

\end{table}

\section{\texorpdfstring{$\zeta$}{zeta}-lyzing}\label{s:zl}
As it happens, the setting developed above can be turned into a practical advantage. It improves our ability to compute $\zeta_F(3)$ for number fields $F$ where existing algorithms struggle to deliver more than a few dozens of accurate digits.

Take, for instance, the degree $14$ field \href{https://www.lmfdb.org/NumberField/14.14.19852344578500703125.1}{14.14.19852344578500703125.1}, whose Galois closure $14T61$ is substantial, of degree $2\cdot 7!$. Computing the Dedekind $\zeta$-value at $3$ to high precision is notably difficult for this field. gp/PARI, at $100$-bit realprecision, returns
\[
1.000146684702559704405958901,
\]
and yet only the first $19$ of the $28$ digits produced are actually correct.

By contrast, the regulator can be rapidly computed to arbitrary precision using the single-valued trilogarithm $\tCL_3(z)$ evaluated on a non-torsion element of $\wB_3(F)$. 

The weight $3$ Bloch group regulator comes out as  
\[
\reg^{\bloch}_3(F) \ceq 27358089271734794708.291653037194757361068384\dots\]
Using the definition in Conjecture~\ref{conj:kappa}, 
we obtain an approximation to the expected integer $\kappa_3(F)$ as
\[6103515625.000000000223.\]
The nine zeros after the decimal point strongly suggest that the true value is the rounded integer $6103515625 = 5^{14}$.

This lets us turn the tables.  Since polylogarithms admit arbitrary precision computations, we can also take advantage of them to pin down $\zeta_F(3)$ itself to arbitrary precision. In effect, the few reliable digits obtainable by current algorithms serve as leverage---a seed that we then catalyze into full precision, providing justification to call the process ``$\zeta$-lyzing''.

\bibliographystyle{amsplain_initials_eprint}
\bibliography{weight_3}

 \end{document}